\newcommand{\qi}{\mathbf{i}}
\newcommand{\qj}{\mathbf{j}}
\newcommand{\qk}{\mathbf{k}}
\newcommand{\Cj}[1]{{#1}^\ast}
\newcommand{\No}[1]{#1\Cj{#1}}
\newcommand{\R}{\mathbb{R}}
\renewcommand{\H}{\mathbb{H}}
\renewcommand{\DH}{\mathbb{DH}}
\newcommand{\Hstar}{\mathbb{H}_{\ast 1}}
\newcommand{\SO}[1][3]{\operatorname{SO}(#1)}
\algnewcommand\algorithmicinput{\textbf{Require:}}
\algnewcommand\Input{\item[\algorithmicinput]}
\DeclareMathOperator{\mrpf}{mrpf}
\newtheorem{thm}{Theorem}[section]
\newtheorem{lem}[thm]{Lemma}
\newtheorem{prop}[thm]{Proposition}
\newtheorem{cor}[thm]{Corollary}
\theoremstyle{definition}
\newtheorem{defn}[thm]{Definition}
\theoremstyle{remark}
\newtheorem{rmk}[thm]{Remark}
\newtheorem{example}[thm]{Example}
\title[]{A Multiplication Technique for the Factorization of Bivariate Quaternionic Polynomials}
\date{\today}
\author{Johanna Lercher \and Hans-Peter Schröcker}
\address{Department of Basic Sciences in Engineering Sciences, University of Innsbruck, Technikerstr.~13, 6020 Innsbruck, Austria}
\email{johanna.lercher@uibk.ac.at}
\email{hans-peter.schroecker@uibk.ac.at}
\keywords{multiplication technique, bivariate factorization, necessary
  factorization condition, mechanism science}
\subjclass[2020]{
  16S36, 12D05, 70B10} 
\begin{document}

\begin{abstract}
  We consider bivariate polynomials over the skew field of quaternions, where
  the indeterminates commute with all coefficients and with each other. We
  analyze existence of \emph{univariate} factorizations, that is, factorizations
  with univariate linear factors. A necessary condition for existence of
  univariate factorizations is factorization of the norm polynomial into a
  product of univariate polynomials. This condition is, however, not sufficient.
  Our central result states that univariate factorizations exist after
  multiplication with a suitable univariate real polynomial as long as the
  necessary factorization condition is fulfilled. We present an algorithm for
  computing this real polynomial and a corresponding univariate factorization.
  If a univariate factorization of the original polynomial exists, a suitable
  input of the algorithm produces a constant multiplication factor, thus giving
  an \emph{a posteriori} condition for existence of univariate factorizations.
  Some factorizations obtained in this way are of interest in mechanism science.
  We present an example of a curious closed-loop mechanism with eight revolute
  joints.
\end{abstract}

\maketitle

\section{Introduction}
By $(\H,+,\cdot)$ we denote the skew field of real quaternions with the usual addition and non-commutative multiplication. Let $\H[t]$ be the ring of polynomials in the indeterminate $t$ with multiplication defined by the convention that $t$ commutes with all coefficients. A \emph{fundamental theorem of algebra} also holds true for polynomials in $\H[t]$: Each non-constant univariate quaternionic polynomial admits a factorization with linear factors (c.~f. \cite{gentili08,gordon65,niven41}). Remarkably, such a factorization need not be unique. In general, there exist $n!$ different factorizations with linear factors, where $n$ denotes the degree of the polynomial. This ambiguity is caused by non-commutativity of quaternion multiplication.

Factorization of univariate quaternionic polynomials with linear factors is well understood. This article focuses on bivariate quaternionic polynomials with indeterminates $t$, $s$ that commute with all coefficients and with each other. Denote the thus obtained polynomial ring by $\H[t,s]$. In contrast to the univariate case, little is known about criteria that ensure existence of factorizations of a polynomial $Q \in \H[t,s]$ and the same can be said about algorithms to compute factorizations. We briefly explain an important difference between the univariate and the bivariate case: Denote by $\Cj{Q}$ the conjugate polynomial, obtained by conjugating the quaternion coefficients of $Q$. A necessary factorization condition is factorizability of the norm polynomial $\No{Q} \in \R[t,s]$ of $Q$. The reason for this is multiplicativity of the quaternion norm: $Q=Q_1 Q_2$ with $Q_1, Q_2 \in \H[t,s]$ implies $\No{Q}=(\No{Q_1})(\No{Q_2})$ with $\No{Q_1}, \No{Q_2} \in \R[t,s]$. The norm polynomial of a univariate quaternionic polynomial of degree greater than one is a product of at least two irreducible real polynomials that play an important role in the computation of factorizations. This is in contrast to the bivariate case where factorizability of the norm polynomial is exceptional and not a sufficient condition for existence of factorizations. One instance for this (that we will frequently encounter in this text) is given in~\cite{beauregard93}.

There exist few factorization results for bivariate quaternionic polynomials. The most promising article in this context is \cite{Skopenkov19} by Skopenkov and Krasauskas. They introduce a technique for the factorization of bivariate quaternionic polynomials of bi-degree $(n,1)$, where $n \in \mathbb{N}_0$ is an arbitrary non-negative integer. In \cite{lercher20}, we build on their article and characterize all possible factorizations of quaternionic polynomials of bi-degree $(n,1)$ with univariate linear factors. We are not aware of further publications concerning this topic.

In this article, we broaden the ideas of \cite{Skopenkov19} and state some new factorization results for bivariate quaternionic polynomials. Our contribution deals with factorizations of polynomials $Q \in \H[t,s]$ with univariate linear factors, that is,
\begin{equation*}
  Q=a(u_1-h_1)\cdots(u_k-h_k)
\end{equation*}
with $u_i \in \{t,s\}$ and $a, h_i \in \H$ for $i=1,\ldots,k$. We call these
factorizations \emph{univariate} since each factor of $Q$ is a univariate
quaternionic polynomial in either $t$ or $s$. Univariate factorizations may only
occur if $Q$ satisfies $\No{Q}=PR$ with $P \in \R[t]$, $R \in \R[s]$. We call
this rather restrictive condition the \emph{necessary factorization condition}.
By \cite{Skopenkov19}, this condition is also sufficient for quaternionic
polynomials of bi-degree $(n,1)$. Even this special case is interesting and we
consider the results in \cite{Skopenkov19} as an important contribution in the
development of a factorization theory for bivariate quaternionic polynomials.
The degree restrictions in \cite{Skopenkov19} are quite strong but,
unfortunately, necessary. The polynomial
\begin{equation}
\label{NFCfails}
	B=(t^2-\qi)s^2+(2\qj t)s+(\qi t^2-1)
\end{equation}
was given by Beauregard in \cite{beauregard93}. It is an example that satisfies the
necessary factorization condition $\No{B}=(t^4+1)(s^4+1)$ but is irreducible in
$\H[t,s]$ (c.~f. \cite[p.68--69]{beauregard93} for a proof over rational
quaternions and \cite[Proof of Example 1.5]{Skopenkov19} for a proof over real
quaternions).

We have, however,
\begin{multline}
\label{exmult}
	(t^2+1)B=\left(t+\frac{-\qk-\qj}{\sqrt{2}}\right)\left(s+\frac{1-\qi}{\sqrt{2}}\right)\left(t+\frac{1+\qk}{\sqrt{2}}\right)\\ \left(t+\frac{-1+\qk}{\sqrt{2}}\right)\left(s+\frac{-1+\qi}{\sqrt{2}}\right)\left(t+\frac{\qj-\qk}{\sqrt{2}}\right),
\end{multline}
an identity that was called \emph{surprising} by Skopenkov and Krasauskas
\cite[Example~2.11]{Skopenkov19}. In the present paper we show that similar
identities not only hold for the Beauregard polynomial \eqref{NFCfails} but are
generally true: For any polynomial $Q \in \H[t,s]$ satisfying the necessary
factorization condition there exists a real polynomial $K \in \R[t]$ (or $K \in \R[s]$) such that
$KQ$ admits a univariate factorization. The real polynomial $K$ depends on an
ordering of the irreducible real quadratic factors of $R$ (or $P$, respectively), where $\No{Q} =
PR$ with $P \in \R[t]$ and $R \in \R[s]$. For this reason, we can find up to $m!+n!$
univariate factorizations of real multiples of $Q$, where $(m,n)$ denotes the
bi-degree of $Q$. If $Q$ admits a univariate factorization, a suitable ordering
of irreducible real quadratic factors indeed yields $K = 1$. We provide a sharp
upper bound on the total number of possible univariate factorizations of $Q$ and show how to find all of them.

Factorizability of quaternionic polynomials is an interesting research topic in
its own right. However, polynomials over the quaternions and in particular over
the dual quaternions (Section~\ref{sec:remarkable-example}) received recent
attention because of their relation to kinematics and mechanism science:
Univariate (dual) quaternionic polynomials can be used to represent
one-parametric rational motions \cite{hegedus13} and factorizations correspond
to decompositions of the respective motion into simpler motions. In particular,
factorizations with linear factors yield decompositions into rotations and
translations. In \cite{gallet15,hegedus13,hegedus15,li15,li18c}, this fact has
been used to construct mechanisms following a given motion or curve. From
different factorizations mechanical ``legs'' with revolute (or prismatic) joints
are constructed that fully constrain a mechanism with prescribed rational
end-effector motion or trajectory.

The findings of this article allow the extension of this concept to
two-parametric motions. A first example will be outlined in the concluding
Section~\ref{sec:remarkable-example}. In this context,
\begin{enumerate}[(i)]
\item univariate factorizations are of particular interest as each
  univariate linear factor can be realized by a revolute/translational joint and
\item multiplication with a real polynomial factor is admissible as it does not
  change the underlying rational motion.
\end{enumerate}

The paper is structured as follows: In Section~\ref{sec:preliminaries}, we
settle our notation and recall some known facts concerning factorization theory
of univariate and bivariate quaternionic polynomials. In
Section~\ref{sec:multiplication-trick}, we formulate and prove the central
result on existence of $K \in \R[t]$ (or $K \in \R[s]$, respectively) such that
$KQ$ admits a univariate factorization. The proof is constructive and can be
cast into an algorithm for finding $K$ and a univariate factorization of $KQ$.
In Section~\ref{sec:a-posteriori-condition} we consider the case that the
original polynomial $Q$ admits a univariate factorization. In
Section~\ref{sec:remarkable-example} we present applications to mechanism
science and an outlook to future research.

\section{Preliminaries}
\label{sec:preliminaries}

Let us consider the Clifford algebra $\mathcal{C}\ell_{(3,0,1)}$.\footnote{We
  use a Clifford algebra that is larger than actually needed for defining
  quaternions because we will later (in Section~\ref{sec:remarkable-example})
  also consider dual quaternions.} The index $(3,0,1)$ indicates existence of
four basis elements $e_1, e_2, e_3, e_4$, where the first $3$ basis elements
square to $1$ ($e_1^2=e_2^2=e_3^2=1$) and the last basis element squares to $0$
($e_4^2=0$). By the defining conditions for Clifford algebras, the basis
elements also satisfy
\begin{equation*}
  e_ie_j+e_je_i = 0 \quad \text{for} \quad i \neq j.
\end{equation*}
We use the notation $e_{12\cdots k} \coloneqq e_1e_2\cdots e_k$, $0 \leq k \leq
4$, for products of consecutive basis elements. Let us consider the subalgebra
$\H$ of quaternions which is generated by the elements $e_0$, $e_{23}$,
$e_{31}$, $e_{12}$ which are, in that order, identified with
$1$, $\qi$, $\qj$ and $\qk$. An element $h \in \H$ can be written as
\begin{equation*}
  h=h_0+h_1\qi + h_2\qj + h_3\qk \quad \text{with} \quad h_0, h_1, h_2, h_3 \in \R.
\end{equation*}
The basis elements satisfy the multiplication rules
\begin{equation*}
  \qi^2=\qj^2=\qk^2=\qi \qj \qk = -1.
\end{equation*}
Multiplication of quaternions is not commutative but $\H$ forms at least a division ring. The conjugate of $h$ is $\Cj{h}=h_0 - h_1\qi -
h_2\qj - h_3\qk$, its norm is the real number
$\No{h}=h_0^2+h_1^2+h_2^2+h_3^2$ and the inverse of $h \in \H \setminus \{0\}$ is given by $h^{-1}=\Cj{h}/\No{h}$.

By $\H[t]$ and $\H[t,s]$ we denote the set of univariate and bivariate
quaternionic polynomials, respectively. Addition and scalar multiplication of
quaternionic polynomials are defined in the common way. For multiplication we
additionally assume that the indeterminates commute with the coefficients and
with each other. The conjugate $\Cj{Q}$ of a quaternionic polynomial $Q$ is
defined by conjugating its coefficients. Its norm polynomial is the real
polynomial given by $\No{Q}$. We use the following notations: Let us fix
$m,n \in \mathbb{N}_0$. By $\H_{mn}$ we denote the set of quaternionic polynomials
in $\H[t,s]$ of degree at most $m$ in $t$ and at most $n$ in $s$. By replacing
$m$ or $n$ with the symbol $\ast$, we denote the set of polynomials with
arbitrary degree in the respective variable. For instance, the set of
polynomials of bi-degree $(n,1)$ with arbitrary $n \in \mathbb{N}_0$ is denoted by
$\Hstar$. This notation is taken from \cite{Skopenkov19}.

In this article, we repeatedly refer to known factorization results for
univariate and bivariate polynomials. In particular, we use some crucial results
from \cite{Skopenkov19}. Therefore, we would like to briefly recapitulate the
most important univariate and bivariate factorization results. The respective algorithms are taken from \cite[univariate case]{hegedus13} and \cite[bivariate
case]{Skopenkov19} and adapted to our setting. To be able to highlight the similarities between the
univariate and bivariate results, our explanations may sometimes slightly differ
from the original papers.

\subsection{Univariate Polynomials}
\label{subsec:univariate-factoriation}

Each non-constant univariate quaternionic polynomial $Q \in \H[t]$ admits a
factorization with linear factors which can be found as follows: At first, we
assume that $Q$ does not possess a real polynomial factor of positive degree. By
\cite[Theorem 2.3]{huang02}, this can be done without loss of generality since
non-constant univariate real polynomials always admit factorizations with linear
factors over $\H$. For later reference, we state this as a proposition:

\begin{prop}
\label{prop:realfactors}
A non-constant univariate real polynomial $F \in \R[t]$ admits a factorization
with linear factors over $\H$.
\end{prop}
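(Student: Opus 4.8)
The plan is to reduce to the classical real factorization of $F$ and then to treat the irreducible quadratic factors individually over $\H$. First I would invoke the fundamental theorem of algebra over $\R$ to write
\begin{equation*}
  F = a_N \prod_i (t-r_i) \prod_j (t^2 + b_j t + c_j),
\end{equation*}
where $a_N \in \R$ is the leading coefficient, the $r_i \in \R$ are the real roots, and each quadratic $t^2 + b_j t + c_j$ is irreducible over $\R$, i.e.\ $b_j^2 - 4c_j < 0$. The real linear factors $t - r_i$ are already linear factors over $\H$ because $\R \subset \H$, and the leading constant $a_N$ can be absorbed into the prefactor of the desired factorization. So the whole content of the statement is concentrated in the quadratic factors.

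The key step is to factor each irreducible real quadratic into two linear quaternionic factors. For a quaternion $h$ with real part $h_0$ one computes $(t-h)(t-\Cj{h}) = t^2 - (h+\Cj{h})t + \No{h} = t^2 - 2h_0 t + \No{h}$. Matching this with $t^2 + b_j t + c_j$ forces $h_0 = -b_j/2$ together with $\No{h} = c_j$. Writing $h = h_0 + h_1\qi + h_2\qj + h_3\qk$, the norm condition $\No{h} = h_0^2 + h_1^2 + h_2^2 + h_3^2 = c_j$ becomes $h_1^2 + h_2^2 + h_3^2 = c_j - b_j^2/4$. Irreducibility of the quadratic gives exactly $c_j - b_j^2/4 > 0$, so this equation describes a nonempty $2$-sphere of admissible imaginary parts. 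Choosing any such $h$ yields the $\H$-linear factorization $t^2 + b_j t + c_j = (t-h)(t-\Cj{h})$.

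Finally I would assemble the full factorization by concatenating all the linear factors obtained above. Since each original real factor is central in $\H[t]$, real coefficients commuting with all quaternions and with $t$, the order of multiplication is irrelevant and the product of the linear factors reproduces $F$ exactly. I do not expect a genuine obstacle: the only substantive observation is that irreducibility over $\R$ is precisely what guarantees a positive right-hand side in the norm equation and hence the existence of a suitable $h \in \H$. The freedom in choosing $h$ on the $2$-sphere also makes transparent the non-uniqueness already announced in the introduction, underlying the $n!$ distinct factorizations of a univariate polynomial of degree $n$.
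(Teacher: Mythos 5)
Your proof is correct and is essentially the argument the paper has in mind: the paper gives no proof of its own but delegates the whole statement to a citation of Huang and So's quadratic formulas for quaternions, whose content is precisely your key step of solving $t^2+b_jt+c_j=(t-h)(t-\Cj{h})$ via $h_0=-b_j/2$ and $h_1^2+h_2^2+h_3^2=c_j-b_j^2/4>0$. Your write-up simply supplies the details that the citation encapsulates, including the correct observation that centrality of the real quadratic blocks lets you assemble the linear factors in any block order.
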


The norm polynomial $\No{Q}$ is a real polynomial and it can be decomposed into irreducible factors over $\R$. If $M \in \R[t]$ is a monic irreducible factor of $\No{Q}$, one can apply division with remainder\footnote{Division with remainder is applicable for polynomials in any polynomial ring $R[t]$, where $R$ is an arbitrary ring, if the leading coefficient of the divisor polynomial is invertible (for instance, c.~f. \cite{li18}). If $R$ is not commutative, we have to distinguish between left-division and right-division with remainder. However, $M \in \R[t]$ is in the center of $\H[t]$, hence this distinction is not necessary in our case and the denotation \emph{division with remainder} is justified.} and obtains
\begin{equation}
  \label{DR}
  Q=TM+S \quad \text{with} \quad T, S \in \H[t], \ \deg(S) < 2.
\end{equation}
If $S=0$, we have $Q=TM$ and $Q$ possesses a real polynomial factor of positive
degree, a case which we already excluded. Therefore, we just have to consider
the case $S \neq 0$: The norm polynomial $\No{S}$ of the remainder $S$ satisfies
the important identity $\No{S}=cM$ with $c \in \R \setminus \{0\}$; indeed, the
polynomial $M$ is an irreducible factor of $\No{Q}$ and
\begin{equation}
\label{normS}
	\No{S}=\No{(Q-TM)}=\No{Q}-Q\Cj{T}M-T\Cj{Q}M+\No{T}M^2.
\end{equation}
Since each term on the right-hand side of Equation~\eqref{normS} is divisible by
$M$, the same is true for $\No{S}$. If $\deg(S)=0$, the polynomial $M$ would not
divide $\No{S}\in \R\setminus\{0\}$ and we conclude $\deg(S) = 1$, $\deg(M) =
2$ and also $\No{S}=cM$ with $c \in \R \setminus \{0\}$.

Writing $S=a(t-h)$ with $a, h \in \H$ and $c=\Cj{a}a$, $M=\Cj{(t-h)}(t-h)$, we
conclude that $(t-h)$ is a right factor of both $S$ and $M$ and hence, by
\eqref{DR}, also a right factor of $Q$. Using polynomial division we find $Q'
\in \H[t]$ such that $Q=Q'(t-h)$. By proceeding inductively with $Q'$ instead of $Q$, we obtain the desired factorization with linear factors.

\begin{rmk}
\label{univunique}
Let us close this section with some important observations:

\begin{enumerate}[(i)]
\item \label{it:deg2} If $Q$ has no real polynomial factor of positive degree,
  the above arguments show that a monic irreducible factor $M$ of $\No{Q}$ is of
  degree two.
\item \label{it:unique} The right factor $t-h$ of $Q$ depends on the irreducible
  polynomial $M \in \R[t]$ of $\No{Q}$. By choosing different irreducible
  factors of the norm polynomial, we obtain different right factors of $Q$. In
  general, there exist $\deg Q!$ different factorizations with linear factors.
\item \label{it:leftfactors} Following above ideas, one successively separates
  linear \emph{right} factors until one obtains the desired factorization of
  $Q$. It is equally possible to split off \emph{left} factors, for example by
  computing right factors of the conjugate polynomial $\Cj{Q}$.
\item \label{it:uniquelydet} Unless $Q$ is divisible by $M$, the right factor
  $t-h$ with $\No{(t-h)}=M$ is uniquely determined \cite[Lemma~3]{hegedus13}.
A similar statement is true for the left factors of~$Q$.
\end{enumerate}	
\end{rmk}

We conclude this section with a word of warning on some algorithmic assumptions made throughout this paper. The presented iterative factorization algorithm for univariate quaternionic polynomials requires in each step as input a real quadratic factor $M$ of the norm polynomial $\No{Q}$. In general, the computation of $M$ is only possible numerically which poses the (yet unsolved) question for a robust numeric version of the factorization algorithm. Similar concerns apply to all factorization algorithms we encounter in this article. This should be kept in mind when reading phrases like ``we can compute all factorizations'' or, ``this algorithm yields a factorization of a type xyz if it exists'' in the remainder of this paper.

\subsection{Bivariate Polynomials}
\label{subsec:bivariate-factorization}

Let us now consider univariate factorizations of bivariate quaternionic
polynomials, that is, factorizations with only univariate linear
factors. Any univariate factorization of a bivariate polynomial $Q \in \H[t,s]$
can be represented by a univariate factorization with monic linear factors
\begin{equation}
\label{univfact}
  Q=a(u_1-h_1)\cdots(u_k-h_k),
\end{equation}
where $u_i \in \{t,s\}$ and $a, h_i \in \H$ for $i=1,\ldots,k$. We usually
consider univariate factorizations of the form \eqref{univfact}.

Univariate factorizations may only occur if the norm polynomial factors as $\No{Q}=PR$ with $P \in \R[t]$ and $R \in \R[s]$. In order to see this, compute
\begin{equation*}
	\No{Q}=a(u_1-h_1)\cdots(u_k-h_k)\Cj{(u_k-h_k)}\cdots \Cj{(u_1-h_1)}\Cj{a}.
\end{equation*}
Factors of the form $(u_i-h_i)\Cj{(u_i-h_i)}$ are real univariate polynomials in $t$
or $s$ and hence part of the center of $\H[t,s]$. Using this fact we obtain
\begin{equation*}
	\No{Q}=\No{a}\No{(u_1-h_1)}\cdots \No{(u_k-h_k)},
\end{equation*}
and $\No{Q}$ turns out to be a product of univariate real polynomials.

\begin{defn}[NFC]
	Let $Q \in \H[t,s]$ be a bivariate quaternionic polynomial. We say that $Q$ satisfies the \emph{necessary factorization condition} (NFC) if $\No{Q}=PR$ with $P \in \R[t]$ and $R \in \R[s]$.
\end{defn}

In order to state results on existence of univariate factorizations, we proceed
similarly to Section~\ref{subsec:univariate-factoriation} and neglect real
polynomial factors of positive degree: We endow the polynomial ring $\H[t,s]$
with the graded lexicographic order. For a polynomial $Q=Q_0+\qi Q_1+\qj Q_2 +
\qk Q_3 \in \H[t,s]$ we define $\mrpf(Q)$ to be the monic greatest common
divisor of the polynomials $Q_0,Q_1,Q_2,Q_3 \in \R[t,s]$.\footnote{The acronym
  $\mrpf$ was introduced in \cite{li18b} and stands for ``monic real polynomial
  factor'' of maximal degree.} We then find a polynomial $Q' \in \H[t,s]$ such
that $Q=\mrpf(Q)Q'$. Obviously, the polynomial $Q'$ does not possess a real
divisor of positive degree. If $Q$ satisfies the NFC, the real polynomial
$\mrpf(Q)$ is a product of univariate real polynomials as well. These
polynomials admit factorizations over $\H$ according to
Proposition~\ref{prop:realfactors}. Therefore, it is sufficient to consider
polynomials $Q \in \H[t,s]$ with $\mrpf(Q)=1$.

The NFC turns out to be sufficient if we consider polynomials $Q \in \Hstar$.
This immediately follows from \cite{Skopenkov19} and we will briefly explain how
to find a factorization:

Consider a non-constant polynomial $Q \in \Hstar$ with $\mrpf(Q)=1$ satisfying
the NFC $\No{Q}=PR$ with $P \in \R[t]$ and $R \in \R[s]$. If $\deg(P)=0$, $Q$ is
a linear polynomial in $\H[s]$, hence there is nothing to show. Otherwise, we
choose a monic irreducible factor $M \in \R[t]$ of the univariate $t$-factor $P$
in the norm polynomial. Let $T, S \in \H[t,s]$ be polynomials with
\begin{equation}
\label{DR2}
   Q=TM+S \quad \text{and} \ S \in \H_{11}.
\end{equation}
Representation \eqref{DR2} can be found by writing $Q=Q_0+sQ_1$ with $Q_0, Q_1 \in \H[t]$ and dividing both, $Q_0$ and $Q_1$, with remainder by $M$.\footnote{If we view $Q$ and $M$ to be univariate polynomials in $t$ having coefficients in $\H[s]$, the polynomial $S \in \H_{11}$ is the unique remainder after division with remainder of $Q$ by $M$. Note that, in this setting, the leading coefficient of $M$ is $1$, thus invertible, and division is possible. The decomposition $Q=Q_0+sQ_1$ was used to highlight the fact $S \in \H_{11}$.} If $S=0$, we have $Q=TM$, whence $Q$ has a real polynomial factor of positive degree. This contradicts our assumption $\mrpf(Q)=1$. Therefore, $S \neq 0$, and we repeat the computations made in Equation~\eqref{normS} and conclude that $M$ divides $\No{S}$. Similar to the univariate case, this is only possible if $\deg(M)=2$ and $\No{S}=HM$ for an appropriate polynomial $H \in \R[s]$. Note that $H$ is indeed univariate since $S \in \H_{11}$. Hence $S$ satisfies the NFC. (We would like to emphasize the resemblance of the univariate and bivariate ideas: In the univariate case we obtained $\No{S}=cM$ with $c \in \R\setminus \{0\}$.)

The remainder polynomial $S$ possesses a very low bi-degree and, therefore, is
easy to handle. Indeed, in the simple case of polynomials in $\H_{11}$
satisfying the NFC, the direct computation of a univariate factorization via the
\emph{Splitting Lemma} by Skopenkov and Krasauskas (c.~f. \cite[Splitting
Lemma~2.6]{Skopenkov19}) is possible. Using their result, one obtains
\begin{equation*}
\begin{aligned}
   \text{\emph{Case 1:}} \quad S &= (t-h)A \quad \text{or}\\
   \text{\emph{Case 2:}} \quad S &= A(t-h),
\end{aligned}
\end{equation*}
with $A \in \H[s]$, $\No{(t-h)}=M$, $\No{A}=H$. Now, one can either separate the
factor $t-h$ on the left or the right of $Q$. Indeed, if \emph{Case 1} is
satisfied, $t-h$ is a left factor of both $S$ and $M$ and, by \eqref{DR2},
also of $Q$. (Note that $TM=MT$ since $M \in \R[t]$ is a real polynomial.)
Otherwise, $t-h$ is a right factor of $S$ and $M$ and hence, again by
\eqref{DR2}, also a right factor of $Q$. Choosing the next irreducible factor of
$P$ and proceeding inductively yields a decomposition with univariate linear
factors.

\begin{rmk}
\label{rmk:starone}
\begin{enumerate}[(i)]
\item \label{it:deg2starone} The fact $\mrpf(Q)=1$ implies that the monic
  irreducible factor $M$ of $P$ is of degree two. It is not unique and a
  different choice for $M$ will, in general, also yield a different left or
  right factor $t-h$.
\item \label{bivunique} Similar to the univariate case, we may conclude that the
  univariate \emph{right} factors with norm polynomial $M$ are, in general,
  unique. Indeed, let $t-h$ be a linear right factor of $Q$ and let $M \coloneqq
  \No{(t-h)}$. If $Q$ is not divisible by $M$, we claim that $h$ is uniquely
  determined. The fact $M \nmid Q$ implies that there exists at least one
  coefficient of $Q$, viewed as an element of $\H[t][s]$, that is not divisible
  by $M$. This coefficient is a univariate polynomial in $\H[t]$ with right
  factor $t-h$. Hence $t-h$ is uniquely determined by Remark \ref{univunique},
  Item~\ref{it:uniquelydet}. The same argument can be applied to \emph{left}
  factors of $Q$. This statement is not only true for $Q \in \Hstar$ but also
  for $Q \in \H[t,s]$. Moreover, the arguments remain true for linear
  $s$-factors of~$Q$.
\end{enumerate}
\end{rmk}

For the sake of completeness and the reader's convenience we describe the
factorization technique for polynomials in $\Hstar$ in Algorithm~\ref{starone}.
The algorithm relies on the factorization technique of \cite{Skopenkov19} and in
particular on their crucial \emph{Splitting Lemma}. Unlike~\cite{Skopenkov19},
our slightly modified version forces the linear factors in $t$ to be
\emph{monic} as described in the current section. It takes as input a
\emph{tuple} of quadratic factors of $P$ rather than $P$ itself in order to
account for the ambiguity mentioned in Remark~\ref{rmk:starone},
Item~\ref{it:deg2starone}. Moreover, we assume $\mrpf(Q)=1$. The adaption of
Algorithm~\ref{starone} to the case $\mrpf(Q) \neq 1$ is straightforward (c.~f.
Proposition~\ref{prop:realfactors}). Since $\H[t] \subseteq \Hstar$, the
algorithm also works for univariate polynomials in $\H[t]$ (c.~f. lines
\ref{alg:univ}-\ref{alg:enduniv}).

Implementation of Algorithm~\ref{starone} and also the later
Algorithm~\ref{alg:multtrick} only requires a few standard ingredients:
Factorization of real polynomials, quaternion algebra, and division with
remainder over polynomial rings. Our examples were computed using a self written
library for the computer algebra system Maple \cite{maple}.

\begin{algorithm}
\caption{Factorization of polynomials in $\Hstar$}
\label{starone}
\begin{algorithmic}[1]
\Input A non-constant polynomial $Q \in \Hstar$ with $\mrpf(Q)=1$, satisfying the NFC $\No{Q}=PR$, where $P \in \R[t]$, $R \in \R[s]$.
\Require A tuple $(M_1,\ldots,M_n)$ of monic irreducible polynomials in $\R[t]$ such that $P=\operatorname{lc}(P)\cdot M_1\cdots M_n$, where $\operatorname{lc}(P)$ is the leading coefficient of $P$.
\Ensure Tuples $F_l = (L_1, \ldots, L_k)$, $F_r = (R_1, \ldots, R_p)$ of
univariate, linear, monic polynomials in $\H[t]$ and a linear polynomial $U \in \H[s]$ such that
$Q = L_1 \cdots L_k \cdot U \cdot R_1 \cdots R_p$.\\
{$i \gets 1$, $F_l \gets (\ )$, $F_r \gets (\ )$ \quad (empty tuples)}\\
{$U \gets Q$}
\While{$i\leq n$}
\State $T \gets$ quotient of division with remainder of $U$ by $M_i$ in $\H[s][t]$.
\State $S \gets$ remainder of division with remainder of $U$ by $M_i$ in $\H[s][t]$.
\State $S_0 \gets \operatorname{coeff}(S,s,0)$,\quad $S_1 \gets \operatorname{coeff}(S,s,1)$
\State $S_{00} \gets \operatorname{coeff}(S_0,t,0)$,\
       $S_{10} \gets \operatorname{coeff}(S_0,t,1)$
\State $S_{01} \gets \operatorname{coeff}(S_1,t,0)$,\
       $S_{11} \gets \operatorname{coeff}(S_1,t,1)$
\If{$S_1=0$} \label{alg:univ}
\State $F_r \gets \operatorname{concat}(t+S_{10}^{-1}S_{00},F_r)$\quad
(concatenation of tuples)
\State $U \gets T \Cj{(t+S_{10}^{-1}S_{00})}+S_{10}$ \label{alg:enduniv}
\Else \quad (Splitting Lemma of \cite{Skopenkov19})
\State $q \gets -S_{10}S_{11}^{-1}$
\State $p \gets S_{00}-S_{10}S_{11}^{-1}S_{01}$
\If{$p=0$}
\State $F_r \gets \operatorname{concat}(t+S_{11}^{-1}S_{01},F_r)$
\State $U \gets T\Cj{(t+S_{11}^{-1}S_{01})}+(s-q)S_{11}$
\Else
\State $F_l \gets \operatorname{concat}(F_l,t+S_{01}S_{11}^{-1})$
\State $U \gets \Cj{(t+S_{01}S_{11}^{-1})}T+S_{11}(s-\Cj{(p^{-1}qp)})$
\EndIf
\EndIf
\State $i \gets i+1$
\EndWhile\\
\Return $F_l$, $U$, $F_r$
\end{algorithmic}
\end{algorithm}

\section{A Multiplication Technique}
\label{sec:multiplication-trick}

As outlined in Section \ref{subsec:bivariate-factorization}, existence of
univariate factorizations of bivariate quaternionic polynomials is exceptional.
They may only occur if the -- rather restrictive -- NFC is satisfied.
However, the Beauregard polynomial \eqref{NFCfails} is irreducible in $\H[t,s]$
even though it satisfies the NFC. Nevertheless, a real multiple of $B$ admits
the desired decomposition with univariate linear factors~\eqref{exmult}.
Motivated by this example, we introduce a \emph{multiplication technique}. We
consider bivariate quaternionic polynomials satisfying the NFC and show that
suitable real polynomial multiples of these polynomials admit a univariate
factorization.

We state our results for \emph{monic} polynomials with respect to the graded
lexicographic order. This can be done without loss of generality: If $Q \in
\H[t,s]$ is not monic, we write $Q=\operatorname{lc}(Q)Q'$, where
$\operatorname{lc}(Q) \in \H$ is the leading coefficient of $Q$, and apply our
results to the monic polynomial $Q' \in \H[t,s]$. If a monic polynomial
satisfies the NFC $\No{Q}=PR$ with $P \in \R[t]$ and $R \in \R[s]$, we may
assume that $P$ and $R$ are monic as well. Moreover, if a monic polynomial
admits a factorization of the form \eqref{univfact}, we may always conclude
$a=1$.

\begin{thm}\label{multtrick}
Let $Q \in \H[t,s]$ be a non-constant monic bivariate quaternionic polynomial
with $\mrpf(Q)=1$ satisfying the NFC $\No{Q} = PR$ for $P \in \R[t], R \in
\R[s]$. There exists $K \in \R[t]$ such that $KQ$ decomposes into monic
univariate linear factors, that is,
\begin{equation*}
	KQ=(u_1-h_1) \cdots (u_k-h_k)
\end{equation*}
with $k \in \mathbb{N}$, $u_i \in \{t,s\}$ and $h_i \in \H$ for $i=1,\ldots,k$.
\end{thm}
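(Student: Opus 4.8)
The plan is to argue by induction on $n=\deg_s Q$, peeling off one univariate linear $s$-factor in each step at the cost of one quadratic real factor in $t$, so that the multiplier $K$ accumulates as a product of such quadratics in $\R[t]$. The base case $n=0$ is immediate: then $Q\in\H[t]$ is univariate and factors into linear factors by the univariate theory of Section~\ref{subsec:univariate-factoriation}, so $K=1$. (Throughout I would silently reduce to $\mrpf(Q)=1$ as in Section~\ref{subsec:bivariate-factorization}: a nontrivial $\mrpf(Q)$ is a product of univariate real polynomials, each of which splits into univariate linear factors by Proposition~\ref{prop:realfactors} and is simply carried along.)

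For the inductive step, let $\deg_s Q=n\geq 1$. First I would note that $R$ is a product of $n$ irreducible real \emph{quadratic} factors in $s$: since $\deg_s\No{Q}=2n$ we have $\deg_s R=2n$, and a real linear factor $s-c$ of $R$ would force $\No{Q}(t,c)=P(t)R(c)=0$, hence $Q(t,c)=0$ and $(s-c)\mid Q$, contradicting $\mrpf(Q)=1$. Picking one such factor $N\mid R$ and dividing $Q$ by $N$ in $\H[t][s]$ gives $Q=TN+S$ with $\deg_s S\leq 1$. Exactly as in Section~\ref{subsec:bivariate-factorization}, $S\neq 0$ (else $N\mid Q$), the computation~\eqref{normS} yields $N\mid\No{S}$, and comparing $s$-degrees gives $\No{S}=\tilde P N$ with $\tilde P\in\R[t]$; thus $S\in\Hstar$ satisfies the NFC and $\deg_s S=1$. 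Applying the $\Hstar$-technique (Algorithm~\ref{starone}) to $S$, I would write $S=L(s-h)\tilde R$ with $L,\tilde R\in\H[t]$ and $\No{(s-h)}=N$, the single $s$-factor sandwiched between two blocks of linear $t$-factors.

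The decisive computation relocates this $s$-factor to the boundary. Using that $N$ and $\No{\tilde R}$ lie in the center of $\H[t,s]$ and that $N=\Cj{(s-h)}(s-h)$, I would obtain
\begin{equation*}
  Q\Cj{\tilde R}=TN\Cj{\tilde R}+S\Cj{\tilde R}=T\Cj{\tilde R}\,\Cj{(s-h)}(s-h)+\No{\tilde R}\,L(s-h)=Q_1(s-h),
\end{equation*}
so that $s-h$ is a \emph{right} factor of $Q\Cj{\tilde R}$, where $Q_1\in\H[t,s]$ has $\deg_s Q_1=n-1$ and $\No{Q_1}=(P\No{\tilde R})(R/N)$, i.e.\ $Q_1$ satisfies the NFC with strictly smaller $s$-degree. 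By the induction hypothesis there is $K_1\in\R[t]$ with $K_1Q_1$ a product of univariate linear factors. Setting $K\coloneqq K_1\No{\tilde R}\in\R[t]$ and using centrality of $K_1$ and $\No{\tilde R}=\Cj{\tilde R}\tilde R$ gives
\begin{equation*}
  KQ=K_1\,Q\,\No{\tilde R}=K_1\,Q\Cj{\tilde R}\,\tilde R=K_1\,Q_1(s-h)\,\tilde R=(K_1Q_1)\,(s-h)\,\tilde R.
\end{equation*}
Here $K_1Q_1$ is already a univariate factorization, $s-h$ is a univariate $s$-factor, and the trailing $\tilde R\in\H[t]$ splits into linear $t$-factors by the univariate theory; hence $KQ$ decomposes into monic univariate linear factors, which closes the induction (leading-coefficient normalization is routine).

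The main obstacle is exactly this relocation step. The $\Hstar$-factorization forces $s-h$ to sit strictly \emph{between} groups of $t$-factors, and no multiplication by a real polynomial in $t$ can move it to an end on its own. The resolution is two-fold: multiplying by $\Cj{\tilde R}$ slides $s-h$ to the right of $Q\Cj{\tilde R}$, and the compensating real factor $\No{\tilde R}=\Cj{\tilde R}\tilde R$ merely reintroduces $\tilde R$ as harmless trailing $t$-factors, all while the accumulated multiplier $K=K_1\No{\tilde R}$ stays inside the center $\R[t]$. As a sanity check, the identity~\eqref{exmult} for Beauregard's polynomial is precisely this construction with $n=2$, $K_1=1$, and $\tilde R=t+(\qj-\qk)/\sqrt{2}$, for which $\No{\tilde R}=t^2+1=K$.
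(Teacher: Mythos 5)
Your proof is correct and follows essentially the same strategy as the paper's: induction on the $s$-degree, division with remainder by an irreducible quadratic factor $N$ of $R$, factorization of the remainder via the $\Hstar$ technique, and multiplication by the norm of one of the univariate $t$-blocks to relocate the $s$-factor to an end of the polynomial. The only difference is cosmetic: you multiply by $\No{\tilde R}$ to slide the $s$-factor to the \emph{right}, whereas the paper multiplies by the norm of the \emph{left} $t$-block to extract the prefix $(t-h_1)\cdots(t-h_l)(s-h)$ on the left --- a mirror variant the paper itself points out in the remark immediately following its proof.
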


\begin{proof}
  For $m,n \in \mathbb{N}_0$ let $(m,n)$ be the bi-degree of $Q$, where $m+n=k$.
  We prove the statement via induction over $n$. For $n \in \{0,1\}$ there is
  nothing to show. Indeed, if $n=0$, the polynomial $Q$ is an element of $\H[t]$
  and we can apply the univariate factorization results stated in Section~\ref{subsec:univariate-factoriation}. If $n=1$, it holds that $Q \in \Hstar$
  and we find a factorization according to Section
  \ref{subsec:bivariate-factorization}. In both cases we obtain the desired
  result by choosing $K=1$.

  Let us now assume $n \ge 2$. We choose a monic irreducible factor $M \in
  \R[s]$ of the univariate $s$-factor $R$ in the norm polynomial. We apply
  division with remainder\footnote{We view divident $Q$ and divisor $M$ as
    univariate polynomials in $s$, having coefficients in $\H[t]$. The leading
    coefficient $1$ of $M$ is invertible so that division is possible.} and
  obtain polynomials $T, S \in \H[t,s]$ such that
\begin{equation}
\label{staroneremainder}
Q=TM+S,
\end{equation}
where $S \in \Hstar$. If $S=0$, we have $Q=TM$, a
contradiction to $\mrpf(Q)=1$. Hence $S \neq 0$, and we repeat the computations
applied in Section~\ref{subsec:univariate-factoriation} and Section~\ref{subsec:bivariate-factorization}:
\begin{equation}
\label{normS2}
	\No{S}=\No{(Q-TM)}=\No{Q}-Q\Cj{T}M-T\Cj{Q}M+\No{T}M^2.
\end{equation}
Since $M$ is a factor of $\No{Q}$, each term on the right-hand side of Equation~\eqref{normS2} is divisible by $M$. This is only possible if $\deg(M)=2$ and $\No{S}=HM$ for an appropriate univariate polynomial $H \in \R[t]$. Hence the remainder polynomial $S$ satisfies the NFC. We apply Algorithm \ref{starone} to $S/\mrpf(S)$ and inductively produce linear $t$-factors on the left-hand side and on the right-hand side of the $s$-factor:
\begin{equation*}
	\frac{S}{\mrpf(S)}=(t-h_1)\cdots(t-h_l)(as+b)(t-k_1)\cdots(t-k_r)
\end{equation*}
with $l,r \in \mathbb{N}_0$ and appropriate quaternions $h_1, \ldots, h_l, a,b,
k_1,\ldots,k_r \in \H$.\footnote{The polynomial $S/\mrpf(S)$ need not possess
  any $t$-factor on the left- or right-hand side of the $s$-factor. In this
  case, $l = 0$ or $r = 0$ and we appeal to the convention that the empty
  product equals~$1$.} For better readability, we use the following
abbreviation: We set $h \coloneqq -ba^{-1}$ and $G \coloneqq a
(t-k_1)\cdots(t-k_r) \cdot \mrpf(S) \in \H[t]$. We then obtain
\begin{equation*}
	S=(t-h_1)\cdots(t-h_l)(s-h)G.
\end{equation*}
Note that $M = \No{(s-h)}$.
Define
\begin{equation*}
	K_1' \coloneqq (t-h_1)\cdots(t-h_l) \in \H[t] \quad \text{and} \quad K_1 \coloneqq \No{K_1'} \in \R[t].
\end{equation*}
Then
\begin{equation*}
\begin{split}
	K_1Q &= \No{K_1'}(TM+S)=K_1'M\Cj{K_1'}T+S\No{K_1'}\\
	&=(t-h_1)\cdots(t-h_l)(s-h)\underbrace{(\Cj{(s-h)}\Cj{K_1'}T+G\No{K_1'})}_{\eqqcolon Q'} \\
	&=(t-h_1)\cdots(t-h_l)(s-h)Q'.
\end{split}
\end{equation*}
The polynomial $Q' \in \H_{\ast (n-1)}$ still satisfies the NFC because the norm polynomial $\No{Q'}$ divides $K_1^2\No{Q}=K_1^2PR$ with $K_1^2P\in \R[t]$, $R \in \R[s]$.

By induction hypothesis, there exists $K_2 \in \R[t]$ such that $K_2Q'$ admits a
univariate factorization. (This is true even if $Q'$ has a non-trivial real
polynomial factor, a case which is formally not covered by the induction
hypothesis. This factor is negligible by Proposition~\ref{prop:realfactors}.)

Define $K \coloneqq K_2K_1 \in \R[t]$. Then
\begin{equation*}
\begin{split}
	KQ=& \ K_2(K_1Q)=K_2((t-h_1)\ldots(t-h_l)(s-h)Q')\\
	=& \ (t-h_1)\ldots(t-h_l)(s-h)K_2Q',
\end{split}
\end{equation*}
which proves the claim.
\end{proof}

\begin{rmk}
	In above proof, we computed a factorization of the remainder polynomial
  $S=(t-h_1)\cdots(t-h_l)(s-h)G$, where $G \in \H[t]$, and then forced the
  factor $(t-h_1)\cdots(t-h_l)(s-h)$ to be a left factor of a suitable real
  multiple of $Q$. Obviously, one may also compute a factorization of the form
  $S=F(s-k)(t-k_1)\cdots(t-k_r)$, where $F \in \H[t]$, and find a real
  polynomial multiple of $Q$ admitting a univariate factorization with right
  factor $(s-k)(t-k_1)\cdots(t-k_r)$.
\end{rmk}

\begin{example}
	Let us precisely investigate the Beauregard polynomial \eqref{NFCfails} of \cite{beauregard93}:
\begin{equation*}
	B = (t^2-\qi)s^2+(2\qj t)s+(\qi t^2-1) \in \H_{22} \text{ with } \No{B}=(t^4+1)(s^4+1)
\end{equation*}
Following the proof of Theorem \ref{multtrick}, we choose an irreducible factor
$M \coloneqq s^2+\sqrt{2}s+1 \in \R[s]$ of $s^4+1$. Division with remainder
yields a remainder $S \in \Hstar$ of the form
\begin{equation*}
	S = (\sqrt{2}\qi + 2\qj t - \sqrt{2}t^2)s+\qi(t^2 + 1) - 1 - t^2,
\end{equation*}
which satisfies the NFC. We compute the following factorization of $S$ via Algorithm~\ref{starone}:
\begin{equation*}
	S = \left(t+\frac{-\qk - \qj}{\sqrt{2}}\right)\left(s + \frac{1 - \qi}{\sqrt{2}}\right)\left(\qj - \qk - \sqrt{2}t\right).
\end{equation*}
Define
\begin{equation*}
	K' \coloneqq \left(t+\frac{-\qk - \qj}{\sqrt{2}}\right) \quad \text{ and } \quad K \coloneqq \No{K'} = (t^2+1).
\end{equation*}
Then
\begin{equation*}
	KB=\left(t+\frac{-\qk - \qj}{\sqrt{2}}\right)\left(s + \frac{1 - \qi}{\sqrt{2}}\right)B'
\end{equation*}
with $B' \in \Hstar$ satisfying the NFC.
Algorithm~\ref{starone} yields one factorization of the form
\begin{equation*}
	B'=\left(t+\frac{1+\qk}{\sqrt{2}}\right)\left(t+\frac{-1+\qk}{\sqrt{2}}\right)\left(s+\frac{-1+\qi}{\sqrt{2}}\right)\left(t+\frac{\qj-\qk}{\sqrt{2}}\right),
\end{equation*}
which shows that factorization \eqref{exmult} can be found by means of the
multiplication technique of Theorem~\ref{multtrick}.
\end{example}

\begin{rmk}
\label{simplify}
Even though our multiplication technique actually yields a factorization of a
real polynomial multiple $KQ$ of $Q$, it is sometimes possible to simplify the
real polynomial $K \in \R[t]$. In the first step of the proof of
Theorem~\ref{multtrick}, we computed polynomials $K_1 \in \R[t]$ and $Q' \in
\H[t,s]$ such that $K_1Q=(t-h_1)\cdots(t-h_l)(s-h)Q'$. It may happen that the
polynomials $K_1$ and $Q'$ share a real polynomial factor of positive degree.
One may therefore replace both $K_1$ and $Q'$ by $K_1/\gcd(K_1,\mrpf(Q'))$ and
$Q'/\gcd(K_1,\mrpf(Q'))$, respectively. By applying this idea in each step of
the algorithm, it might be possible to significantly reduce the degree of $K$.
This is illustrated in the next example.
\end{rmk}

\begin{example}
\label{multtrickex3}
	Consider
  \begin{multline*}
		Q \coloneqq
(\qi(-3t + 1) + t^2 - t - 2)s^2\\+ (\qi(-t^2 + t + 2) + \qj(-2t^2 - 2t) + \qk(2 + 2t) - 3t + 1)s\\
		+ \qi(2t^2 - 2t + 4) + \qj(-t^2 + 2t - 1) + \qk(-t^2 - 3) - 2t - 2.
  \end{multline*}
The polynomial satisfies the NFC
\begin{equation*}
\No{Q} = (t^2 + 1)(t^2 - 2t + 5)(s^2 + 2)(s^2 + 3).
\end{equation*}
We choose an irreducible factor
\begin{equation*}
	s^2+3 \in \R[s],
\end{equation*}
apply division with remainder and find a remainder $S \in \Hstar$ that admits the following factorization:
\begin{equation*}
	S = (t - \qi)\left(t - \frac{2\qi}{5} - \frac{4\qj}{5} + \frac{4\qk}{5} + \frac{3}{5}\right)\left(s - \frac{\qi}{5} - \frac{7\qj}{5} + \qk\right)(-\qi - 2\qj).
\end{equation*}
We define
\begin{equation*}
	K \coloneqq (t^2 + 1)\left(t^2 + \frac{6}{5}t + \frac{9}{5}\right)
\end{equation*}
and obtain
\begin{equation*}
	KQ = (t - \qi)\left(t - \frac{2\qi}{5} - \frac{4\qj}{5} + \frac{4\qk}{5} + \frac{3}{5}\right)\left(s - \frac{\qi}{5} - \frac{7\qj}{5} + \qk\right)Q'
\end{equation*}
for an appropriate polynomial $Q' \in \Hstar$. It turns out that the factor $t^2+1$ is also a factor of $Q'$, whence multiplication with this factor is redundant and we just have to consider the polynomial
\begin{equation*}
	\frac{K}{t^2+1} = t^2 + \frac{6}{5}t + \frac{9}{5}.
\end{equation*}
According to Section \ref{subsec:bivariate-factorization} we find a univariate factorization of $\frac{Q'}{t^2+1}$. Ultimately, we obtain
\begin{multline}
\label{complexfact}
	\left(t^2 + \frac{6}{5}t + \frac{9}{5}\right)Q = (t - \qi)\left(t - \frac{2\qi}{5} - \frac{4\qj}{5} + \frac{4\qk}{5} + \frac{3}{5}\right)\left(s - \frac{\qi}{5} - \frac{7\qj}{5} + \qk\right)\\
	\left(t-\frac{6\qi}{5} + \frac{8\qj}{5} - 1\right)\left(s-\frac{4\qi}{5} - \frac{3\qj}{5} - \qk\right)\left(t - \frac{2\qi}{5} - \frac{4\qj}{5} - \frac{4\qk}{5} + \frac{3}{5}\right).
\end{multline}
\end{example}

The proof of Theorem \ref{multtrick} is constructive. This fact enables us an
algorithmic formulation of the multiplication technique
(Algorithm~\ref{alg:multtrick}). It relies on the factorization technique of
\cite{Skopenkov19} for polynomials in $\Hstar$, which is described in Algorithm~\ref{starone}.

\begin{algorithm}
\caption{Factorization by Multiplication with Real Polynomial}
\label{alg:multtrick}
\begin{algorithmic}[1]
\Input A non-constant monic polynomial $Q \in \H[t,s]$ with $\mrpf(Q)=1$, satisfying the NFC $\No{Q}=PR$, where $P \in \R[t]$, $R \in \R[s]$.
\Require A tuple $(M_1,\ldots,M_n)$ of irreducible polynomials in $\R[s]$ such that $R=M_1\cdots M_n$.
\Ensure A real polynomial $K \in \R[t]$ and a tuple of univariate linear polynomials $L=(L_1,\ldots,L_k)$ such that $KQ=L_1\cdots L_k$.\\
{$i \gets 1$, $K \gets 1$, $L \gets (\ )$ (empty tuple)}\\
{$U \gets Q$}
\While{$i < n$}
\State $T \gets$ quotient of division with remainder of $U$ by $M_i$ in $\H[t][s]$
\State $S \gets$ remainder of division with remainder of $U$ by $M_i$ in $\H[t][s]$
\State \label{line:starone} Compute a factorization $\frac{S}{\mrpf(S)}=(t-h_{i_1})\cdots(t-h_{i_l})(s-h_i)(as+b)(t-k_{i_1})\cdots(t-k_{i_r})$
via Algorithm~\ref{starone}.
\State $h_i \gets ba^{-1}$, $G \gets a(t-k_{i_1})\cdots(t-k_{i_r})\cdot \mrpf(S)$
\State $K_i \gets (t-h_{i_1})\cdots(t-h_{i_l})\Cj{(t-h_{i_l})}\cdots \Cj{(t-h_{i_1})}$
\State $U \gets \Cj{(s-h_i)}\Cj{(t-h_{i_l})}\cdots \Cj{(t-h_{i_1})}T+GK_i$
\State $D \gets \gcd(K_i,\mrpf(U))$
\State $K_i \gets K_i/D$, $U \gets U/D$
\State $L \gets \operatorname{concat}(L,t-h_{i_1},\ldots,t-h_{i_l},s-h_i)$
\State $K \gets K_i\cdot K$
\State $i \gets i+1$
\EndWhile
\State Compute a factorization
$U=(t-h_{n_1})\cdots(t-h_{n_l})(s-h_n)(t-k_{n_1})\cdots(t-k_{n_r})$ via Algorithm~\ref{starone}.
\State $L \gets \operatorname{concat}(L,t-h_{n_1},\ldots,t-h_{n_l},s-h_n,t-k_{n_1},\ldots,t-k_{n_r})$\\
\Return $K, L$
\end{algorithmic}
\end{algorithm}

\begin{rmk}
Let us continue with a few remarks on Algorithm~\ref{alg:multtrick}:
\begin{enumerate}[(i)]
\item Algorithm~\ref{alg:multtrick} depends on a tuple $(M_1,\ldots,M_n)$ of
  irreducible real factors of $R$. This tuple is unique up to permutation. By
  choosing different orders of the irreducible factors, we obtain, similar to
  Remark~\ref{rmk:starone}, Item~\ref{it:deg2starone}, different real multiples
  of $Q$ that admit univariate factorizations. In general, if the polynomials
  $M_1,\ldots, M_n$ are pairwise different, we find $n!$ different univariate
  factorizations of real multiples of $Q$.
\item Obviously, our ideas also work for irreducible $t$-factors of $\No{Q}$. We
  then obtain a polynomial $K \in \R[s]$ such that $KQ$ admits a univariate
  factorization. If $P=N_1\cdots N_m$ is a decomposition of the $t$-factor $P$
  in the norm polynomial with irreducible factors and if $N_1,\ldots, N_m$ are
  different in pairs, the algorithm yields $m!$ different univariate
  factorizations.
\item For a polynomial $Q \in \H_{mn}$ with $\mrpf(Q)=1$ we find up to $m!+n!$
  different univariate factorizations of real multiples of~$Q$.
\end{enumerate}
\end{rmk}

\begin{example}
\label{exmulttrick5}
	Let us again consider the Beauregard polynomial \eqref{NFCfails} from \cite{beauregard93}. It is of bi-degree $(2,2)$. The multiplication technique yields $2!+2!=4$ factorizations of real multiples of the polynomial~$B$:	
\begin{multline*}
	(t^2+1)B=\left(t+\frac{-\qk-\qj}{\sqrt{2}}\right)\left(s+\frac{1-\qi}{\sqrt{2}}\right)\left(t+\frac{1+\qk}{\sqrt{2}}\right)\\ \left(t+\frac{-1+\qk}{\sqrt{2}}\right)\left(s+\frac{-1+\qi}{\sqrt{2}}\right)\left(t+\frac{\qj-\qk}{\sqrt{2}}\right).
\end{multline*}
\begin{multline*}
	(t^2+1)B=\left(t + \frac{\qj+ \qk}{\sqrt{2}}\right) \left(s + \frac{-1+\qi}{\sqrt{2}}\right)\left(t +\frac{1-\qk}{\sqrt{2}}\right) \\ \left(t+\frac{-1-\qk}{\sqrt{2}}\right)\left(s+\frac{1-\qi}{\sqrt{2}}\right)\left(t +\frac{\qk-\qj}{\sqrt{2}}\right)
\end{multline*}
\begin{multline*}
	(s^2+1)B= \left(s + \frac{\qj-\qk}{\sqrt{2}}\right)\left(t +\frac{-1-\qi}{\sqrt{2}}\right)\left(s + \frac{1+\qk}{\sqrt{2}}\right) \\ \left(s+\frac{-1+\qk}{\sqrt{2}}\right)\left(t+\frac{1+\qi}{\sqrt{2}}\right)\left(s +\frac{-\qj-\qk}{\sqrt{2}}\right)
\end{multline*}
\begin{multline*}
	(s^2+1)B=\left(s + \frac{\qk-\qj}{\sqrt{2}}\right)\left(t+\frac{1+\qi}{\sqrt{2}}\right)\left(s +\frac{1-\qk}{\sqrt{2}}\right) \\ \left(s+\frac{-1-\qk}{\sqrt{2}}\right)\left(t +\frac{-1-\qi}{\sqrt{2}}\right)\left(s+\frac{\qj+\qk}{\sqrt{2}}\right)
\end{multline*}
\end{example}

\section{An a Posteriori Condition for Existence of Factorizations}
\label{sec:a-posteriori-condition}

So far we have just considered univariate factorizations of real multiples of
$Q$, where $Q$ satisfies the NFC $\No{Q} = PR$ with $P \in \R[t]$ and $R \in
\R[s]$. We did not yet take the possibility into account that $Q$ itself admits
a univariate factorization. If this is the case, multiplication with a real
polynomial is not necessary. An \emph{a priori} condition for existence of a
univariate factorization of $Q$ is yet unknown. Moreover, if a univariate
factorization exists, we do not know how to find it. At least the second issue
can be tackled by means of our multiplication technique: According to Remark
\ref{simplify}, it may sometimes happen that the real polynomial $K \in \R[t]$
cancels out. We will show that, if a univariate factorization exists, there is a
suitable permutation $(M_1,\ldots,M_n)$ of irreducible $s$-factors of $R$ such
that Algorithm~\ref{alg:multtrick} yields $K = 1$ and a factorization that is
\emph{equivalent} to the given factorization in a sense to be specified.

We proceed by introducing a sensible concept of equivalence of factorizations
that will allow us to formulate our statements in a clear and simple way. It
identifies factorizations that arise from ambiguities of factorizations of
univariate polynomials (c.~f. Remark~\ref{univunique}, Item~\ref{it:uniquelydete}) and
from commutation of adjacent $t$- and $s$-factors:

\begin{example}
  Because of $(t+\qi+\qj)(t-\qi) = (t + \qi)(t - \qi + \qj)$, the polynomial
\begin{equation*}
    Q = (s-\qi)(t+\qi+\qj)(t-\qi)(s-2\qi+\qk)
  \end{equation*}
obviously allows a second factorization:
\begin{equation*}
    Q = (s-\qi)(t + \qi)(t - \qi + \qj)(s-2\qi+\qk).
  \end{equation*}
Moreover, in this special example, we may even write
\begin{equation*}
	Q = (t + \qi)(s-\qi)(t - \qi + \qj)(s-2\qi+\qk)
\end{equation*}
since $(t+\qi)$ and $(s-\qi)$ commute. We will view all of these factorizations
of $Q$ as equivalent.
\end{example}

\begin{defn}
  \label{def:equivalence}
  Given a univariate factorization $Q = (u_1-h_1)\cdots(u_k-h_k)$ of a monic
  polynomial $Q \in \H[t,s]$ with $u_i \in \{t,s\}$ and $h_i \in \H$ for
  $i=1,\ldots,k$, we define two elementary operations that again yield a
  univariate factorization of~$Q$:
  \begin{enumerate}[(i)]
  \item Interchange $u_l-h_l$ and $u_{l+1}-h_{l+1}$, provided these factors
    commute.
  \item Replace the product $(u_l-h_l)(u_{l+1}-h_{l+1})$ by
    $(u_l-k_l)(u_{l+1}-k_{l+1})$ where $u_l = u_{l+1}$ and
    $(u_l-h_l)(u_{l+1}-h_{l+1}) = (u_l-k_l)(u_{l+1}-k_{l+1})$.
  \end{enumerate}
  Two univariate factorizations of a monic polynomial $Q \in \H[t,s]$ are
  called \emph{equivalent}\footnote{Formally, we consider a univariate
    factorization to be an $(m+n)$-tuple of monic linear univariate polynomials, where
    $(m,n)$ is the bi-degree of $Q$. The set of all possible univariate
    factorizations of $Q$ can then be viewed as a subset of $(\H[t,s])^{m+n}$.}
  if they correspond in a sequence of elementary operations.
\end{defn}
The second elementary operation replaces a quadratic $s$- or $t$-factor by its
second factorization according to Remark~\ref{univunique}, Item~\ref{it:unique}.
In \cite[Definition~4]{li18b} this is called a ``Bennett flip'' and the explicit
formula
\begin{equation*}
  k_{l+1} = -(\Cj{h_l}-h_{l+1})^{-1}(h_lh_{l+1}-h_l\Cj{h_l}),\quad
  k_l = h_l + h_{l+1}-k_{l+1}
\end{equation*}
is provided. It is well-known that all factorizations of a univariate
quaternionic polynomial can be generated by Bennett flips
\cite{hegedus13,li18b}.

Definition~\ref{def:equivalence} captures a natural notion for equivalence of
factorizations in our context. Nonetheless, we also introduce a stricter concept
of equivalence which takes into account the asymmetry of
Algorithm~\ref{alg:multtrick} with respect to $s$ and $t$. Let $Q \in \H[t,s]$ be a
monic polynomial with $\mrpf(Q)=1$. Assume that $Q$ admits two factorizations
with univariate linear factors. Write
\begin{equation}
	\label{twofact}
	\begin{split}
		Q &= A_0(s-h_1)A_1(s-h_2)A_3(s-h_3)\cdots A_{n-1}(s-h_n)A_n\\
		& = \widetilde{A}_0(s-\widetilde{h}_1)\widetilde{A}_1(s-\widetilde{h}_2)\widetilde{A}_3(s-\widetilde{h}_3)\cdots \widetilde{A}_{n-1}(s-\widetilde{h}_n)\widetilde{A}_n
	\end{split}
\end{equation}
with monic $A_i, \widetilde{A}_i \in \H[t]$ for $i=0,\ldots,n$ and $h_i,
\widetilde{h}_i \in \H$ for $i=1,\ldots,n$. We use this notation in order to
highlight the appearance of the univariate linear $s$-factors. The polynomials
$A_i$ and $\widetilde{A}_i$ occur by merging consecutive linear $t$-factors.
Some of the polynomials $A_i$ and $\widetilde{A}_i$ may equal $1$.

\begin{defn}
\label{equiv}
We call the two factorizations \eqref{twofact} \emph{$t$-equivalent} if
\begin{equation*}
  \No{(s-h_i)}=\No{(s-\widetilde{h}_i)}
\end{equation*}
for all $i=1,\ldots,n$. This definition
is an equivalence relation on the set of all possible univariate factorizations
of $Q$. Obviously, there is an analogous concept of \emph{$s$-equivalence} of
factorizations.
\end{defn}

\begin{prop}
\label{t-equivalence}
Let $Q \in \H[t,s]$ be a monic polynomial with $\mrpf(Q)=1$. Moreover, assume
that $Q$ admits two factorizations of the form \eqref{twofact} which are
$t$-equivalent according to Definition~\ref{equiv}.
If $\gcd(\No{A_0},\No{\widetilde{A}_0})=1$, we obtain $A_0(s-h_1)=(s-h_1)A_0$,
$\widetilde{A}_0(s-\widetilde{h}_1)=(s-\widetilde{h}_1)\widetilde{A}_0$, and
$h_1 = \widetilde{h}_1$.
\end{prop}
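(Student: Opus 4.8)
The plan is to leave $\H[t,s]$ and work in $\H(t)[s]$, where $\H(t)$ is the skew field of fractions of the Ore domain $\H[t]$. Because $\R(t)$ is formally real, the quaternion algebra $\H(t)=\H\otimes_\R\R(t)$ is again a division ring, and $\H(t)[s]$ is a domain whose center is $\R(t)[s]$ (the variable $s$ stays central). The engine is the identity $C(s-h)=(s-ChC^{-1})C$ for invertible $C\in\H(t)$, which pushes a $t$-block to the right across an $s$-factor. Applying it repeatedly to the first factorization in \eqref{twofact} I obtain
\begin{equation*}
  Q=(s-g_1)(s-g_2)\cdots(s-g_n)\,\ell,\qquad g_i=(A_0\cdots A_{i-1})\,h_i\,(A_0\cdots A_{i-1})^{-1},
\end{equation*}
where $\ell=A_0A_1\cdots A_n$ is the leading $s$-coefficient of $Q$ and $g_1=A_0h_1A_0^{-1}$. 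The second factorization produces the same $\ell$ and $\widetilde g_1=\widetilde A_0\widetilde h_1\widetilde A_0^{-1}$; cancelling $\ell$ on the right (legitimate in the domain $\H(t)[s]$) leaves the monic identity $(s-g_1)\cdots(s-g_n)=(s-\widetilde g_1)\cdots(s-\widetilde g_n)=:\widehat Q$.

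The crux is to show $g_1=\widetilde g_1$. If $h_1\in\R$, then $t$-equivalence forces $\widetilde h_1=h_1\in\R$, the factor $(s-h_1)$ is central, and all three claims are immediate; so assume $h_1\notin\R$ and set $M\coloneqq\No{(s-h_1)}=\No{(s-\widetilde h_1)}\in\R[s]$, which is then irreducible over $\R(t)$. Since $g_1$ is conjugate to $h_1$ in $\H(t)$ it has the same reduced trace and norm, so $\No{(s-g_1)}=M$ and likewise $\No{(s-\widetilde g_1)}=M$. I first note $M\nmid\widehat Q$: as $M$ is a central real polynomial in $s$, $M\mid\widehat Q$ would give $M\mid Q$ in $\H[t,s]$, hence $M$ divides each real component of $Q$ and therefore $M\mid\mrpf(Q)=1$, which is absurd. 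Dividing $\widehat Q$ by the central $M$ gives $\widehat Q=MC+D$ with $D=D_1s+D_0\neq0$. Writing $\widehat Q=(s-g_1)P_1$ and $M=(s-g_1)\Cj{(s-g_1)}$, I get $D=(s-g_1)\bigl(P_1-\Cj{(s-g_1)}C\bigr)$; a degree comparison forces the second factor to be a constant $e\in\H(t)$, so $D_1=e$, $D_0=-g_1e$, and $g_1=-D_0D_1^{-1}$. The identical computation from $\widehat Q=(s-\widetilde g_1)\widetilde P_1$ and the \emph{same} remainder $D$ (division by the central $M$ is unique) yields $\widetilde g_1=-D_0D_1^{-1}$, whence $g_1=\widetilde g_1$.

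Finally I descend back to $\H[t]$. Substituting $A_0^{-1}=\Cj{A_0}/\No{A_0}$ into $A_0h_1A_0^{-1}=\widetilde A_0\widetilde h_1\widetilde A_0^{-1}$ and clearing denominators yields the polynomial identity
\begin{equation*}
  \No{\widetilde A_0}\,A_0h_1\Cj{A_0}=\No{A_0}\,\widetilde A_0\widetilde h_1\Cj{\widetilde A_0}.
\end{equation*}
The left-hand side is a multiple of $\No{\widetilde A_0}$ and the right-hand side a multiple of $\No{A_0}$; since $\gcd(\No{A_0},\No{\widetilde A_0})=1$, I conclude $\No{A_0}\mid A_0h_1\Cj{A_0}$ and $\No{\widetilde A_0}\mid\widetilde A_0\widetilde h_1\Cj{\widetilde A_0}$. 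As $A_0h_1\Cj{A_0}$ and $\No{A_0}=A_0\Cj{A_0}$ both have degree $2\deg A_0$, with leading coefficients $h_1$ and $1$ (recall $A_0$ is monic), the first divisibility gives $A_0h_1\Cj{A_0}=h_1A_0\Cj{A_0}$; cancelling $\Cj{A_0}$ in the domain $\H[t]$ yields $A_0h_1=h_1A_0$, i.e. $A_0(s-h_1)=(s-h_1)A_0$, and symmetrically $\widetilde A_0(s-\widetilde h_1)=(s-\widetilde h_1)\widetilde A_0$. Commutation then collapses $g_1=A_0h_1A_0^{-1}=h_1$ and $\widetilde g_1=\widetilde h_1$, so $h_1=g_1=\widetilde g_1=\widetilde h_1$.

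I expect the heart of the argument to be the middle step $g_1=\widetilde g_1$, the only place that must leave $\H[t,s]$. Two points need care there: that $\H(t)$ really is a division ring (so that the right-normalization and the cancellation of $\ell$ are valid), and that $M\nmid\widehat Q$, which is precisely where the hypothesis $\mrpf(Q)=1$ enters. The remaining work is routine divisibility and degree bookkeeping.
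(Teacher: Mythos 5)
Your proof is correct, but it takes a genuinely different route from the paper's. Both arguments open with the same move --- reduce modulo the quadratic $M=\No{(s-h_1)}=\No{(s-\widetilde{h}_1)}$ --- but the paper stays inside $\H[t][s]$: by Lemma~\ref{helemma} the unique remainder $S\in\Hstar$ inherits both left factors $A_0(s-h_1)$ and $\widetilde{A}_0(s-\widetilde{h}_1)$, the commutation relations are then imported from Theorem~3.21 of \cite{lercher20} (a structural result on bi-degree $(n,1)$ factorizations, inside which the coprimality hypothesis does its work invisibly), and $h_1=\widetilde{h}_1$ follows from uniqueness of linear left factors (Remark~\ref{rmk:starone}). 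You instead localize to $\H(t)[s]$, normalize both factorizations to $(s-g_1)\cdots(s-g_n)\ell$ with a common leading $s$-coefficient $\ell=A_0\cdots A_n=\widetilde{A}_0\cdots\widetilde{A}_n$, and read off $g_1=-D_0D_1^{-1}$ from the unique remainder mod the central $M$ --- obtaining the conjugacy identity $A_0h_1A_0^{-1}=\widetilde{A}_0\widetilde{h}_1\widetilde{A}_0^{-1}$ \emph{without} the gcd hypothesis --- and only then invoke $\gcd(\No{A_0},\No{\widetilde{A}_0})=1$ in an elementary degree-and-divisibility computation over $\H[t]$ to force $A_0h_1=h_1A_0$. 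Your version buys self-containedness (no reliance on the submitted reference \cite{lercher20}), a transparent accounting of where each hypothesis enters ($\mrpf(Q)=1$ gives $M\nmid Q$ and hence a nonzero remainder; the gcd gives the commutation), and the conjugacy of the leftmost $s$-roots as a free intermediate statement; the paper's version buys brevity and never needs Ore localization. The points you flag as delicate all check out: $\H\otimes_\R\R(t)$ is a division ring because $\R(t)$ is formally real, cancellation of $\ell$ is legitimate there, and $M\mid Q$ in $\H(t)[s]$ descends to $M\mid\mrpf(Q)$ by uniqueness of division by the monic central $M$.
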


We prove the statement by means of the following technical lemma:

\begin{lem}
\label{helemma}
Let $Q \in \H[t,s]$ be a bivariate quaternionic polynomial of the form $Q = A(s-h)B$
with $A \in \H[t]$, $B \in \H[t,s]$ and $h \in \H$.

The remainder of having divided $Q$ by $M \coloneqq \No{(s-h)}$ equals
$R=A(s-h)C$ for an appropriate $C \in \H[t]$, that is, $Q$ and $R$ share the
common left factor $A(s-h)$.
\end{lem}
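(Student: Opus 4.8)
The plan is to carry out the division with remainder explicitly and to exhibit the factor $C$ by a direct calculation rather than an abstract argument. First I would record the shape of the divisor: $M = \No{(s-h)} = (s-h)(s-\Cj{h}) = s^2 - (h+\Cj{h})s + \No{h}$ lies in $\R[s]$, hence is central in $\H[t,s]$, and it is monic of degree two in $s$. Therefore, viewing all polynomials as elements of $\H[t][s]$, division with remainder by $M$ is well defined and the remainder $R$ has $s$-degree at most one. Because $M$ is central, I would begin by dividing the \emph{bivariate} factor $B$ alone, writing $B = B_1 M + B_0$ with $B_0 = U + sV$ and $U, V \in \H[t]$. Then $Q = A(s-h)B = M\cdot A(s-h)B_1 + A(s-h)B_0$, so $Q$ and $A(s-h)B_0$ leave the same remainder modulo $M$. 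This reduces the whole problem to reducing the single expression $A(s-h)(U+sV)$.

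Expanding $A(s-h)(U+sV)$ produces exactly one term of $s$-degree two, namely $A\,s^2V$ coming from $A\,s\cdot sV$; everything else already has $s$-degree at most one. I would eliminate this term with the congruence $s^2 \equiv (h+\Cj{h})s - \No{h} \pmod{M}$. The decisive algebraic point is the identity $(h+\Cj{h}) - h = \Cj{h}$, together with the fact that $\No{h} = h\Cj{h} = \Cj{h}h$ is real. Keeping $A$ and the quaternionic factors in their correct left-to-right order, the $s$-coefficient of the reduced expression collapses from $AU + (h+\Cj{h})AV - AhV$ to $AU + A\Cj{h}V$, precisely because $(h+\Cj{h}) - h = \Cj{h}$, while the constant coefficient becomes $-AhU - A\No{h}V$.

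To finish, I would set $C \coloneqq U + \Cj{h}V \in \H[t]$ and verify directly that $A(s-h)C$ reproduces these two coefficients: its $s$-part is $A(U+\Cj{h}V)$ and its constant part is $-Ah(U+\Cj{h}V) = -AhU - A\No{h}V$, using $h\Cj{h} = \No{h}$. Equivalently, since $h\Cj{h} = \Cj{h}h$ the quaternion $\Cj{h}$ commutes with $(s-h)$, so the surviving term $A\Cj{h}(s-h)V$ equals $A(s-h)\Cj{h}V$ and the remainder recombines as $R = A(s-h)U + A(s-h)\Cj{h}V = A(s-h)(U+\Cj{h}V)$. By uniqueness of the remainder under division by the monic central polynomial $M$, this $R$ is the remainder of $Q$, and it has the claimed common left factor $A(s-h)$.

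The step I expect to be the main obstacle is the careful bookkeeping of non-commutativity during the reduction of the $s^2$-term: the indeterminate $s$ commutes with all coefficients, but $h$ does not, so one must resist simplifying products prematurely. The single fact that makes the argument succeed is $(h+\Cj{h}) - h = \Cj{h}$ (equivalently, that $\Cj{h}$ commutes with $s-h$), which is exactly what forces the leftover contribution to factor through $(s-h)$ again instead of through an unrelated linear polynomial in $s$.
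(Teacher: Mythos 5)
Your proof is correct, but it takes a more computational route than the paper's. The paper also first reduces the part of $Q$ to the right of $A$ modulo the central quadratic $M$, but it exhibits the left factor $(s-h)$ of the remainder abstractly: it divides $(s-h)B = TM + S$ and observes that, since $M = (s-h)\Cj{(s-h)}$, the linear polynomial $(s-h)$ is a left factor of both $(s-h)B$ and $TM = MT$, hence of $S = (s-h)B - TM$; degree count in $s$ then forces $S = (s-h)C$ with $C \in \H[t]$, and uniqueness of the remainder finishes as in your argument. You instead divide $B$ itself, reduce the single $s^2$-term via $s^2 \equiv (h+\Cj{h})s - \No{h}$, and verify by hand that the resulting coefficients recombine as $A(s-h)(U+\Cj{h}V)$; the identity $(h+\Cj{h})-h=\Cj{h}$ that you isolate is just the computational shadow of the paper's structural fact that $(s-h)$ left-divides $M$ (equivalently, that $\Cj{h}$ commutes with $s-h$ because $h\Cj{h}=\Cj{h}h$). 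What your version buys is an explicit formula $C = U + \Cj{h}V$ in terms of the remainder $U + sV$ of $B$; what the paper's version buys is brevity and independence from coefficient bookkeeping. Both hinge on the same three ingredients: centrality and monicity of $M$, uniqueness of the remainder, and the fact that left-multiplication by $A \in \H[t]$ does not raise the $s$-degree.
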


\begin{proof}
	Let us divide the polynomial $(s-h)B$ with remainder by $M$. We obtain $(s-h)B=TM+S$ with $S \in \Hstar$. Obviously, $(s-h)$ is a left factor of both $(s-h)B$ and $TM=MT$. For this reason, it is also a left factor of $S$. We write $S=(s-h)C$ for an appropriate polynomial $C \in \H[t]$. Ultimately, we obtain
\begin{equation}
\label{rem}
	Q=A(s-h)B=A(TM+S)=A(TM+(s-h)C)=ATM+A(s-h)C.
\end{equation}
Since $A$ is a univariate polynomial in $\H[t]$, the polynomial $A(s-h)C$ is an
element of $\Hstar$. By Equation~\eqref{rem}, it turns out to be the unique
remainder of having divided $Q$ by $M$, whence the claim follows.
\end{proof}

\begin{proof}[Proof of Proposition~\ref{t-equivalence}]
We apply division with remainder
  of $Q$ by $\No{(s-h_1)}=\No{(s-\widetilde{h}_1)}$. According to
  Lemma~\ref{helemma}, the remainder polynomial $S \in \Hstar$ admits two
  factorizations of the form
\begin{equation}
\label{twofact1}
	S=A_0(s-h_1)B_0=\widetilde{A}_0(s-\widetilde{h}_1)\widetilde{B}_0,
\end{equation}
where $B_0, \widetilde{B}_0 \in \H[t]$. By \cite[Theorem 3.21]{lercher20}, this
implies $A_0(s-h_1) = (s-h_1)A_0$ and $\widetilde{A}_0(s-\widetilde{h}_1) =
(s-\widetilde{h}_1)\widetilde{A}_0$. Plugging this into \eqref{twofact1} we
obtain $(s-h_1)A_0B_0 = (s-\widetilde{h}_1)\widetilde{A}_0\widetilde{B}_0$. Now
$h_1 = \widetilde{h}_1$ by Remark~\ref{rmk:starone}, Item~\ref{bivunique}. This
concludes the proof.
\end{proof}

\begin{prop}
  \label{equivalence}
  Two factorizations that are $t$-equivalent are also equivalent in the sense of
  Definition~\ref{def:equivalence}.
\end{prop}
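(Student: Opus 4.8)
The plan is to prove that $t$-equivalence implies equivalence (in the sense of Definition~\ref{def:equivalence}) by induction on $n$, the number of linear $s$-factors, using Proposition~\ref{t-equivalence} as the inductive step. The two factorizations in~\eqref{twofact} are $t$-equivalent, so $\No{(s-h_i)}=\No{(s-\widetilde h_i)}$ for all $i$. The issue is that Proposition~\ref{t-equivalence} requires the coprimality hypothesis $\gcd(\No{A_0},\No{\widetilde A_0})=1$, which need not hold in general. So the first thing I would do is address how to reduce to the coprime situation or otherwise extract the first $s$-factor, and then peel it off and induct on the rest.

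Let me think about how to handle the gcd hypothesis and the peeling.

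The author wants to show: given two $t$-equivalent factorizations, one can transform one into the other by elementary operations (commuting adjacent commuting factors, and Bennett flips of two same-type factors). The natural strategy is to show the leading block $A_0(s-h_1)$ and $\widetilde A_0(s-\widetilde h_1)$ can be aligned up to equivalence, then strip them off and recurse.
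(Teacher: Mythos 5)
Your plan---induct on the number of linear $s$-factors, use Proposition~\ref{t-equivalence} to align the leading blocks $A_0(s-h_1)$ and $\widetilde{A}_0(s-\widetilde{h}_1)$, then peel and recurse---is exactly the paper's strategy, and you have correctly identified the one non-trivial obstacle: Proposition~\ref{t-equivalence} only applies under the hypothesis $\gcd(\No{A_0},\No{\widetilde{A}_0})=1$. But the proposal stops at precisely that point. You announce that you will ``address how to reduce to the coprime situation'' and then never do so; the text that follows merely restates the goal. Since this reduction is the only step of the proof that requires an actual argument, what you have written is a correct outline rather than a proof.

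For the record, the missing step goes as follows. As long as $\No{A_0}$ and $\No{\widetilde{A}_0}$ share a monic irreducible quadratic factor $M$, refactor $A_0$ and $\widetilde{A}_0$ (elementary operations of type (ii) inside the univariate $t$-blocks, per Remark~\ref{univunique}, Item~\ref{it:leftfactors}) so that each begins with a monic linear left factor of norm $M$. These are then left factors of $Q$ itself; since $\mrpf(Q)=1$ forces $M\nmid Q$, Remark~\ref{rmk:starone}, Item~\ref{bivunique} guarantees that the linear left factor of $Q$ with norm $M$ is unique, so the two extracted factors coincide and can be stripped from both factorizations simultaneously. Iterating removes the common part of the norms, after which the gcd hypothesis holds, Proposition~\ref{t-equivalence} gives $h_1=\widetilde{h}_1$ and the commutation of the leftover factors $B_0$, $\widetilde{B}_0$ with $(s-h_1)$, and elementary operation (i) moves them past the $s$-factor into the next block $B_0A_1$, where the induction continues. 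Without the uniqueness argument the leading blocks cannot be matched and the induction cannot start.
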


\begin{proof}
  We argue that $t$-equivalence of the two factorizations \eqref{twofact} indeed
  implies that they can be converted into each other by computing suitable
  factorizations of the polynomials $A_i$, $\widetilde{A}_i$ and by commuting
  neighbouring $s$- and $t$-factors: As long as $\No{A_0}$ and
  $\No{\widetilde{A}_0}$ share common irreducible factors
we choose a quadratic irreducible common factor $M$ of $\No{A_0}$ and
  $\No{\widetilde{A}_0}$ and compute left factors of both $A_0$ and
  $\widetilde{A}_0$ with norm $M$ according to Remark \ref{univunique},
  Item~\ref{it:leftfactors}. These factors are also left factors of $Q$ and the
  fact $\mrpf(Q)=1$ implies $M \nmid Q$. According to Remark \ref{rmk:starone},
  Item~\ref{bivunique}, the respective linear factors are equal.

  According to Proposition~\ref{t-equivalence}, the remaining factors $B_0$,
  $\widetilde{B}_0$ of $A_0$ and $\widetilde{A}_0$, respectively, commute with
  $(s-h_1)=(s-\widetilde{h}_1)$. Now, we apply the same arguments to the new
  polynomials $B_1 \coloneqq B_0A_1$, $\widetilde{B}_1 \coloneqq
  \widetilde{B}_0\widetilde{A}_1$ and $(s-h_2), (s-\widetilde{h}_2)$.
  Inductively, the claim follows.
\end{proof}

\begin{example}
	\label{tworepresentatives}
  The two factorizations
\begin{equation*}
		\left(t - \frac{7\qi}{5} + \frac{\qk}{5}\right)\left(t - \frac{3\qi}{5} + \frac{4\qk}{5}\right)(s + 2\qi - 2\qk)(t + 2\qj)(s - \qi - 4\qj + \qk)(t + \qj - 2\qk)
	\end{equation*}
and
	\begin{multline*}
		(t - \qi)(s + 2\qi - 2\qk)\left(t - \frac{4\qi}{3} + \frac{2\qj}{3} + \frac{4\qk}{3}\right)(s - \qi - 4\qj + \qk)\\
		\left(t + \frac{14\qi}{33} + \frac{65\qj}{33} - \frac{32\qk}{33}\right)\left(t - \frac{\qi}{11} + \frac{4\qj}{11} - \frac{15\qk}{11}\right)
	\end{multline*}
  are $t$-equivalent and, by Proposition~\ref{equivalence}, also equivalent. The
  reader is invited to transform them into each other by using only elementary
  operations according to Definition~\ref{def:equivalence}.
\end{example}

\begin{example}
\label{ex:eq-not-teq}
  The polynomial
  \begin{equation}
    \label{sixfact}
    Q = (t - \qi)(t - \qj)(s - \qj)(s - \qi + \qj)(s - 2\qk)(t - 1 - \qj)
  \end{equation}
  admits six univariate factorizations arising from the six different
  factorizations of $(s-\qj)(s - \qi + \qj)(s - 2\qk)$ according to
  Section~\ref{subsec:univariate-factoriation}. None of them are $t$-equivalent.
  Among these factorizations, two have the left factor $(s-\qj)$ which commutes
  with $(t-\qj)$. Each of them gives rise to a further $t$-equivalent
  factorization. All eight factorizations are equivalent, showing that the converse
  of Proposition~\ref{equivalence} is not true. From Corollary~\ref{maxnumfact}
  below it will follow that the polynomial admits no further factorizations.
\end{example}

The following theorem is the centerpiece of the present section:

\begin{thm}
\label{exfact}
Let $Q \in \H[t,s]$ be a monic quaternionic polynomial 
satisfying the \emph{NFC} $\No{Q}=PR$, where $P \in \R[t]$, $R \in \R[s]$, and
$\mrpf(Q) = 1$.
Moreover, let $R=M_1\cdots M_n$ be a decomposition of $R$ with monic, quadratic,
irreducible factors. If the polynomial $Q$ admits a univariate factorization,
there exists a permutation $\sigma \in S_n$ such that Algorithm~\ref{alg:multtrick}
with input $(M_{\sigma(1)},\ldots,M_{\sigma(n)})$ yields $K=1$ and a univariate
factorization which is equivalent to the given factorization of~$Q$.
\end{thm}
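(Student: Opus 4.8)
The plan is to run Algorithm~\ref{alg:multtrick} in lock-step with a given univariate factorization of $Q$ and to prove, by induction on $n$, that the real multiplier is removed by the $\gcd$-reduction in every pass, so that $K=1$.

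\emph{Setup and choice of $\sigma$.} Since $\deg_s\No{Q}=2n$, every univariate factorization of $Q$ contains exactly $n$ linear $s$-factors; I write the given one as $Q = A_0(s-g_1)A_1(s-g_2)\cdots A_{n-1}(s-g_n)A_n$ with monic $A_i\in\H[t]$ and $g_i\in\H$. Because $\mrpf(Q)=1$, no factor $(s-g_i)$ is central, hence no $g_i$ is real and each $\No{(s-g_i)}$ is a monic irreducible quadratic in $\R[s]$; their product equals $R$. Thus there is a permutation $\sigma\in S_n$ with $M_{\sigma(i)}=\No{(s-g_i)}$ for $i=1,\dots,n$, and this is the tuple I feed to the algorithm (the internal calls to Algorithm~\ref{starone} are run with a $t$-ordering chosen as in the cancellation step below).

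\emph{One pass reproduces the leading chunk.} In the first pass the algorithm divides $Q$ by $M_{\sigma(1)}=\No{(s-g_1)}$. By Lemma~\ref{helemma} the remainder is $S=A_0(s-g_1)C$ with $C\in\H[t]$, so $\bar S:=S/\mrpf(S)$ carries a genuine factorization whose leading $t$-block is the primitive part $\bar A_0:=A_0/\mrpf(A_0)$ and whose single $s$-factor has norm $M_{\sigma(1)}$. Algorithm~\ref{starone} returns a factorization of $\bar S$ with leading $t$-block $K_1'=(t-h_{1_1})\cdots(t-h_{1_l})$ and single $s$-factor $(s-h_1)$, and the construction in the proof of Theorem~\ref{multtrick} yields $\No{K_1'}Q=K_1'(s-h_1)Q'$, i.e. $\Cj{K_1'}Q=(s-h_1)Q'$. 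Following the argument proving Proposition~\ref{t-equivalence} (which uses only the leftmost $s$-factor): dividing $\Cj{K_1'}Q$ by $M_{\sigma(1)}$ and applying Lemma~\ref{helemma} to the two expressions $\Cj{K_1'}Q=(\Cj{K_1'}A_0)(s-g_1)(\cdots)=(s-h_1)Q'$ gives two factorizations of the remainder with leading $t$-blocks $\Cj{K_1'}A_0$ and $1$, whose norms are trivially coprime; hence \cite[Theorem~3.21]{lercher20} and Remark~\ref{rmk:starone}, Item~\ref{bivunique}, force $h_1=g_1$ and the commutation of $(s-g_1)$ with $\Cj{K_1'}A_0$. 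Substituting and cancelling $(s-g_1)$ then gives the clean identity $Q'=\Cj{K_1'}A_0A_1(s-g_2)\cdots(s-g_n)A_n$.

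\emph{The multiplier cancels.} The pass keeps the accumulated factor trivial exactly when $\No{K_1'}\mid\mrpf(Q')$, equivalently when $K_1'$ left-divides $A_0A_1\cdots A_n$. Comparing the $s^1$-coefficients of $S=K_1'(s-g_1)G=A_0(s-g_1)C$ (with $G,C\in\H[t]$) shows $K_1'G=A_0C$, so $K_1'$ is a left factor of $A_0C\in\H[t]$; by uniqueness of left linear factors of prescribed norm (Remark~\ref{univunique}, Items~\ref{it:unique} and~\ref{it:leftfactors}, and Remark~\ref{rmk:starone}, Item~\ref{bivunique}) either $K_1'\mid A_0$ or $A_0\mid K_1'$ on the left. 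In the second case the extra block $E$ with $K_1'=A_0E$ satisfies, by the commutation relation above, that $\Cj E\No{A_0}$ commutes with $g_1$, so $E$ commutes with $(s-g_1)$ and is free to migrate to the right of the $s$-factor; choosing the $t$-ordering in Algorithm~\ref{starone} accordingly, I may assume $K_1'=\bar A_0$. Then $Q'=\Cj{\bar A_0}A_0A_1(s-g_2)\cdots=\mrpf(A_0)\No{\bar A_0}\,A_1(s-g_2)\cdots$, so $\No{K_1'}=\No{\bar A_0}$ divides $\mrpf(Q')$, the $\gcd$-reduction cancels the current factor, and the updated $U$ is the real multiple $\mrpf(A_0)A_1(s-g_2)\cdots(s-g_n)A_n$, which still admits a univariate factorization with $s$-factor norms $M_{\sigma(2)},\dots,M_{\sigma(n)}$. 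The induction hypothesis applies to $U$ (its real factor is harmless by Proposition~\ref{prop:realfactors}, exactly as in the proof of Theorem~\ref{multtrick}), so the remaining passes contribute no multiplier and $K=1$.

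\emph{Equivalence, and the main obstacle.} By construction the $s$-factors peeled off in the $n$ passes have norms $\No{(s-g_1)},\dots,\No{(s-g_n)}$ in that order, so the returned factorization of $Q$ (recall $K=1$) is $t$-equivalent to the given one in the sense of Definition~\ref{equiv}; Proposition~\ref{equivalence} upgrades this to equivalence in the sense of Definition~\ref{def:equivalence}, which completes the proof. I expect the cancellation step to be the main obstacle: one must control which linear $t$-factors the Splitting Lemma inside Algorithm~\ref{starone} pushes to the \emph{left} of the $s$-factor. The delicate point is precisely that factors commuting with $(s-g_1)$ may be placed on either side, so one has to argue — combining uniqueness of left factors with the commutation relation extracted from Proposition~\ref{t-equivalence} — that this ambiguity can always be resolved so that the left block stays a left divisor of $A_0$, guaranteeing that the spurious real norm $\No{K_1'}$ is genuinely present in $Q'$ and is therefore removed by the $\gcd$-reduction.
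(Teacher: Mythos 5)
Your overall strategy coincides with the paper's: choose $\sigma$ so that $M_{\sigma(i)}$ is the norm of the $i$-th linear $s$-factor of the given factorization, control the remainder of division by $M_{\sigma(1)}$ via Lemma~\ref{helemma}, show that the first pass peels off the leading block with the real multiplier cancelling, and induct. The setup, the identity $\Cj{K_1'}Q=(s-h_1)Q'$, the identification $h_1=g_1$ via the argument of Proposition~\ref{t-equivalence}, and the concluding appeal to Proposition~\ref{equivalence} all parallel the paper's proof.

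The problem is the cancellation step. From $K_1'G=A_0C$ in $\H[t]$ you infer that ``either $K_1'\mid A_0$ or $A_0\mid K_1'$ on the left.'' This dichotomy does not follow from uniqueness of linear factors of prescribed norm: already $(t-\qi)(t+\qi)=(t-\qj)(t+\qj)$, and neither $t-\qi$ nor $t-\qj$ left-divides the other. More to the point, Algorithm~\ref{starone} applied to $S/\mrpf(S)$ may place on the left a linear $t$-factor whose norm divides $\No{C}$ but not $\No{A_0}$ (for instance when a left factor of $C$ commutes with $(s-g_1)$ and with $A_0$); then neither inclusion holds, your ``second case'' does not apply, and $\No{K_1'}$ need not divide $\mrpf(Q')$. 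The paper sidesteps this issue entirely by reading the non-determinism of line~\ref{line:starone} existentially: since $S=A_0(s-g_1)C$ is itself a valid factorization of the remainder, one simply runs the inner algorithm so that $K_1'$ \emph{is} the left $t$-block of the given factorization of $Q$; then $K_1Q=A_0(s-g_1)Q'$ with $Q'=K_1\widetilde{Q}$, so $K_1$ is manifestly a real divisor of $Q'$ and the $\gcd$-reduction removes it — no divisibility dichotomy is needed. As written, your argument proves the theorem only modulo the unjustified claim that the inner run can always be steered so that $K_1'$ left-divides $A_0$; replacing that claim by the direct choice of the run (as the paper does) closes the gap, whereas the stronger assertion that \emph{every} run with input $(M_{\sigma(1)},\ldots,M_{\sigma(n)})$ yields $K=1$ would require a genuinely different argument.
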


\begin{proof}
By assumption, the polynomial $Q$ decomposes into univariate linear factors. We are interested in the leftmost $s$-factor of this factorization. For this reason, we write
\begin{equation}
\label{findfact}
	Q = (t-h_1)\cdots(t-h_l)(s-h)\widetilde{Q}
\end{equation}
with $l \in \mathbb{N}_0$, $h_1, \ldots, h_l, h \in \H$ and an appropriate polynomial $\widetilde{Q} \in \H[t,s]$. (In case $l=0$, the empty product convention applies.) There exists $k \in \{1,\ldots,n\}$ such that
\begin{equation*}
	\No{(s-h)}=M_k.
\end{equation*}
We set $\sigma(1) \coloneqq k$ and apply the first step of
Algorithm~\ref{alg:multtrick}. By Lemma \ref{helemma}, the remainder polynomial
of having divided $Q$ by $M_{\sigma(1)}$ equals
\begin{equation*}
	S=(t-h_1)\cdots(t-h_l)(s-h)C
\end{equation*}
for an appropriate $C \in \H[t]$. Following the proof of Theorem \ref{multtrick}, we have to define	
\begin{equation*}
  K_1 \coloneqq (t-h_1)\cdots(t-h_l)\Cj{(t-h_l)} \cdots \Cj{(t-h_1)}
\end{equation*}
and obtain
\begin{equation*}
	K_1Q = (t-h_1)\cdots(t-h_l)(s-h)Q'
\end{equation*}
with $Q' \in \H[t,s]$. By Equation~\eqref{findfact},
$(t-h_1)\cdots(t-h_l)(s-h)$ is a left factor of $Q$. Therefore, $K_1$ is a real
divisor of $Q'$ (we have $Q'=K_1\widetilde{Q}$) and we get rid of
this divisor according to Remark \ref{simplify}. Replacing $Q$ by $\widetilde{Q}$ and
proceeding inductively yields the desired factorization of~$Q$.
\end{proof}

\begin{rmk}
  In contrast to Algorithm~\ref{alg:multtrick}, our proof of Theorem~\ref{exfact}
  yields precisely the given factorization of $Q$. Algorithm~\ref{alg:multtrick} is
  not deterministic (for example, line~\ref{line:starone} leaves us a choice).
  Hence, its output is not necessarily identical but $t$-equivalent and, by
  Proposition~\ref{equivalence}, also equivalent to the given factorization
  of~$Q$.
\end{rmk}

Theorem \ref{exfact} provides an \emph{a posteriori} condition for existence of univariate factorizations. In case of existence of a univariate factorization, at least an equivalent univariate factorization can be found by application of the multiplication technique.

\begin{example}
\label{onefact}
	Consider the polynomial $Q$ from Example \ref{multtrickex3}. In Equation~\eqref{complexfact}, we found a factorization of $(t^2+\frac{6}{5}t+\frac{9}{5})Q$ with univariate linear factors by applying Algorithm~\ref{alg:multtrick} with input $(s^2+3,s^2+2)$. In fact, it turns out that already $Q$ admits a univariate factorization. It can be found by application of Algorithm~\ref{alg:multtrick} with input $(s^2+2,s^2+3)$:
\begin{equation*}
	Q = (t - \qi)(s - \qj + \qk)(t - 2\qi - 1)(s - \qi - \qj - \qk).
\end{equation*}
\end{example}

\begin{example}
	The Beauregard polynomial $B$ does not admit a
  factorization with univariate linear factors since it is irreducible in
  $\H[t,s]$. Nonetheless, non-existence of a univariate factorization also
  follows from the fact that Algorithm~\ref{alg:multtrick} with inputs
  $(s^2+\sqrt{2}s+1,s^2-\sqrt{2}+1)$ or $(s^2-\sqrt{2}s+1,s^2+\sqrt{2}s+1)$ only
  yields univariate factorizations of $(t^2+1)B$ (c.~f. Example \ref{exmulttrick5}).
\end{example}

\begin{cor}
  \label{maxnumfact}
	Let $Q \in \H[t,s]$ satisfy the assumptions of Theorem \ref{exfact}. Moreover,
  let $(m,n)$ with $m, n \in \mathbb{N}_0$ be the bi-degree of $Q$. The
  polynomial $Q$ may admit up to $k!$ non-equivalent univariate factorizations, where $k \coloneqq \min(m,n)$.
  All of them can be found by applying the multiplication technique.
\end{cor}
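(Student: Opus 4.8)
The plan is to bound the number of non-equivalent univariate factorizations by a counting argument and then to obtain the \emph{findability} claim directly from Theorem~\ref{exfact}. To every univariate factorization of $Q$ I would attach its \emph{$s$-profile}: after merging consecutive $t$-factors so that the factorization takes the form~\eqref{twofact}, the tuple $(\No{(s-h_1)},\dots,\No{(s-h_n)})$ is a well-defined ordering of the irreducible quadratic factors $M_1,\dots,M_n$ of $R$ (that $h_i\notin\R$, so that $\No{(s-h_i)}$ is really irreducible, follows from $\mrpf(Q)=1$). By Definition~\ref{equiv}, two factorizations with the same $s$-profile are $t$-equivalent, and by Proposition~\ref{equivalence} they are therefore equivalent. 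Reading this contrapositively yields the separation property I need: non-equivalent factorizations have distinct $s$-profiles.

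Next I would convert the separation property into a count of equivalence classes. Because a shared $s$-profile would force two representatives of different classes to be $t$-equivalent, hence equivalent, distinct equivalence classes realize \emph{disjoint} sets of $s$-profiles; since there are at most $n!$ orderings of $M_1,\dots,M_n$, there are at most $n!$ non-equivalent factorizations. The decisive step is to repeat the argument with the roles of $s$ and $t$ exchanged: using the $t$-profiles $(\No{(t-g_1)},\dots,\No{(t-g_m)})$ obtained by writing the factorization as $Q=B_0(t-g_1)B_1\cdots(t-g_m)B_m$ with $B_i\in\H[s]$, together with the $s$-analogue of Proposition~\ref{equivalence}, I obtain a second bound of $m!$. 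Combining the two gives at most $\min(m!,n!)=\bigl(\min(m,n)\bigr)!=k!$ non-equivalent univariate factorizations.

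For the assertion that all of them can be found, I would invoke Theorem~\ref{exfact} (and its symmetric counterpart in $t$): any univariate factorization of $Q$ is equivalent to one output of Algorithm~\ref{alg:multtrick} for some permutation of the irreducible factors of $R$, respectively of $P$. Running the algorithm in the variable carrying the fewer factors over all $k!$ permutations therefore returns a representative of every equivalence class.

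The step I expect to be the main obstacle is the symmetric half of the count: the excerpt establishes Proposition~\ref{equivalence} only for $t$-equivalence, so I must first spell out that the fully analogous statement for $s$-equivalence holds — which it does, since Lemma~\ref{helemma}, Proposition~\ref{t-equivalence} and Proposition~\ref{equivalence} are all symmetric in $s$ and $t$. A subtler point to treat with care is that the $s$-profile is attached to an \emph{ordered} factorization and is \emph{not} constant on equivalence classes (Example~\ref{ex:eq-not-teq} exhibits equivalent factorizations with different profiles); only the disjointness of profiles \emph{across} classes is required, and this is exactly what the separation property delivers. Recognising that one must take the minimum of the two bounds, rather than settling for $n!$, is what produces the stated value $k!$.
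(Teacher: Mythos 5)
Your proof is correct and takes essentially the same route as the paper: both arguments reduce the count to the at most $n!$ (respectively $m!$) orderings of the irreducible quadratic factors of $R$ (respectively $P$), take the minimum, and obtain the findability claim from Theorem~\ref{exfact} and its $s$--$t$-swapped counterpart. The only difference is presentational: the paper bounds the number of classes by counting input tuples of Algorithm~\ref{alg:multtrick} via Theorem~\ref{exfact}, whereas you count $s$-profiles (i.e.\ $t$-equivalence classes) via Proposition~\ref{equivalence}; these counts coincide because the permutation constructed in the proof of Theorem~\ref{exfact} is precisely the $s$-profile of the given factorization.
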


\begin{proof}
Let us consider the case $n \leq m$. By Theorem \ref{exfact}, any univariate factorization of $Q$ is equivalent to a factorization that can be found by application of Algorithm~\ref{alg:multtrick} with suitable input polynomials. However, there exist at most $n!$ different tuples with quadratic,
  irreducible, real polynomials $(M_1,\ldots,M_n)$, which can be used as input for Algorithm~\ref{alg:multtrick}. This proves the claim. If $m \leq n$, we interchange $s$ and $t$.
\end{proof}

\begin{rmk}
An arbitrary input tuple need not automatically yield a univariate factorization
of $Q$. This is only the case if the algorithm produces $K=1$. Moreover, it may
happen that different input tuples of Algorithm~\ref{alg:multtrick} give rise to
univariate factorizations which are not $t$-equivalent, but still equivalent.
Indeed, consider the polynomial $Q$ from Example~\ref{ex:eq-not-teq}.
Algorithm~\ref{alg:multtrick} with different input tuples yields six different
univariate factorizations. All of them are equivalent, demonstrating that the upper bound of Corollary~\ref{maxnumfact} need not be strict.
\end{rmk}

\begin{example}
Consider the polynomial $Q$ from Examples~\ref{multtrickex3} and \ref{onefact}. By Corollary \ref{maxnumfact}, all univariate factorizations are equivalent to the factorization obtained by application of Algorithm~\ref{alg:multtrick} with input $(s^2+2,s^2+3)$. This is the only input tuple which yields a univariate factorization.
\end{example}

\section{A Remarkable Example and Future Research}
\label{sec:remarkable-example}

We conclude this article by a remarkable example which shows that the upper
bound of Corollary~\ref{maxnumfact} is sharp. It also demonstrates applicability
of our factorization ideas to mechanism science which is a topic of future research.

\begin{example}
	The following polynomial $Q \in \H[t,s]$ admits $2!=2$ non-equivalent
  factorizations with univariate linear factors:
\begin{equation}
    \label{eq:remarkable}
	\begin{split}
		Q & = (t + \qi + \qj + 2\qk)(s + \qk)(t -\qi - \qj)(s+\qi + \qj - \qk)\\
		& = (s + \qi + \qj + \qk)(t+\qi + \qj)(s - \qk)(t - \qi - \qj + 2\qk).
	\end{split}
	\end{equation}
Both of them can be found by application of the multiplication technique. To
  the best of our knowledge, this is the first example of a bivariate
  quaternionic polynomial with non-equivalent factorizations and without a real
  polynomial factor of positive degree.
\end{example}

One of the prime applications of quaternions is kinematics. Quaternions can be used to
model the special orthogonal group $\SO$ and linear quaternionic polynomials parametrize
rotations around fixed axes \cite{hegedus13}. Thus, the two factorizations in
\eqref{eq:remarkable} describe, in two ways, a spherical two-parametric motion
of a chain of revolute joints. Note that non-equivalence of factorizations is
crucial as otherwise the two chains of revolute joints ``essentially'' coincide
by arguments similar to \cite{lercher20}.

Quite surprisingly, the two factorizations \eqref{eq:remarkable} (and all other
similar examples that we know of) can be extended to dual quaternions which
allows to extend the spherical chains of revolute joints to spatial mechanisms (c.~f. \cite{hegedus13}).

The dual quaternions $\DH$ are obtained by adjoining a new element $\varepsilon$
to $\H$ which commutes with everything and squares to zero: $\varepsilon^2 = 0$.
The algebra of dual quaternions is isomorphic to the even subalgebra
$\mathcal{C}\ell^{+}_{(3,0,1)}$ of the Clifford algebra
$\mathcal{C}\ell_{(3,0,1)}$, which was introduced in Section
\ref{sec:preliminaries}. The generators of $\mathcal{C}\ell^{+}_{(3,0,1)}$ are
the elements $e_0$, $e_{12}$, $e_{13}$, $e_{14}$, $e_{23}$, $e_{24}$, $e_{34}$ and $e_{1234}$. The isomorphism is given by (c.~f. \cite[p.~184]{klawitter15})
\begin{equation*}
  \begin{split}
     e_0 &\mapsto 1, \quad  e_{23} \mapsto \qi, \quad e_{31} \mapsto \qj, \quad e_{12} \mapsto \qk,\\
        -e_{1234} &\mapsto \varepsilon, \quad e_{14} \mapsto \varepsilon \qi, \quad e_{24} \mapsto \varepsilon \qj, \quad e_{34} \mapsto \varepsilon \qk.
  \end{split}
\end{equation*}
An element $h \in \DH$ can be
uniquely written as $h = p + \varepsilon d$ with primal part $p \in \H$ and dual part
$d \in \H$.

In order to extend the factorization \eqref{eq:remarkable} to $\DH$ we make the
ansatz
\begin{equation}
  \label{eq:remarkable-dual}
  C = G_1G_2G_3G_4 = H_1H_2H_3H_4
\end{equation}
where
\begin{alignat*}{2}
  G_1&= t + \qi + \qj + 2\qk + \varepsilon d_1,\qquad&
  H_1 &= s + \qi + \qj + \qk + \varepsilon f_1,\\
  G_2&= s + \qk + \varepsilon d_2,\qquad&
  H_2 &= t+\qi + \qj + \varepsilon f_2,\\
  G_3&= t -\qi - \qj + \varepsilon d_3,\qquad&
  H_3 &= s - \qk + \varepsilon f_3,\\
  G_4&= s+\qi + \qj - \qk + \varepsilon d_4,\qquad&
  H_4 &= t - \qi - \qj + 2\qk + \varepsilon f_4,
\end{alignat*}
with yet unknown quaternions $d_\ell$, $f_\ell \in \H$. A linear polynomial $t +
p + \varepsilon d \in \DH[t]$ parametrizes a rotation around an arbitrary axis
in space if it satisfies the Study condition $p\Cj{d} + d\Cj{p} = 0$ and $d +
\Cj{d} = 0$ \cite{husty12}. Imposing these conditions on each linear factor and
augmenting this system of equations with the conditions obtained by comparing
coefficients of $s$ and $t$ on both sides of \eqref{eq:remarkable-dual} yields a
system of 48 linear equations for 32 unknowns. The solution space turns out to
be of dimension four:
\begin{gather*}
  \begin{aligned}
    d_1 &= (\beta-\alpha+\delta-2\gamma)\qi+(\alpha - \beta-\delta)\qj+\gamma\qk, \\
    d_2 &= (\delta-\gamma)\qi+\alpha\qj, \\
    d_3 &= -\beta\qi+\beta\qj-(\alpha+\gamma+\delta)\qk, \\
    d_4 &= \frac{1}{2}(\alpha+2\beta-\delta+2\gamma)\qi-\frac{1}{2}(\alpha+2\beta-\delta)\qj+\gamma\qk, \\
    f_1 &= \frac{1}{2}(-\alpha+2\beta+\delta-2\gamma)\qi+\frac{1}{2}(\alpha-2\beta-\delta)\qj+\gamma\qk, \\
    f_2 &= \beta\qi-\beta\qj-(\alpha-\gamma+\delta)\qk, \\
    f_3 &= (\alpha+\gamma)\qi+\delta\qj, \\
    f_4 &= (-\beta-\alpha+\delta-2\gamma)\qi+(\alpha+\beta-\delta)\qj-\gamma\qk,
  \end{aligned}\\
  \alpha, \beta, \gamma, \delta \in \R.
\end{gather*}

Each of the two factorizations in \eqref{eq:remarkable-dual} yields an open
chain of revolute joints which can follow the motion parametrized by $C \in
\DH[t,s]$. However, this requires synchronization of joints that share the same
parameter values $t$ or $s$. In order to avoid this control problem, one can
combine the two open chains to obtain a closed-loop spatial mechanism with eight
revolute joints (Figure~\ref{fig:1}; axes are labeled by the factors in
\eqref{eq:remarkable-dual}) and remarkable properties. As any generic mechanism
of this type, it has two degrees of freedom. Its configuration variety contains
the motion parametrized by the polynomial $C \in \DH[t,s]$ in a very special
way. By construction, locking any of its joints (parametrized by $t$, say)
automatically locks every other joint parametrized by $t$. The remaining four
joints yield, in any configuration, a closed-loop spatial structure with four
revolute joints which one would expect to be rigid but which is movable (via
parameter $s$) at any configuration (for any $t$). A closer investigation of
this mechanism is on the agenda for future research.

\begin{figure}
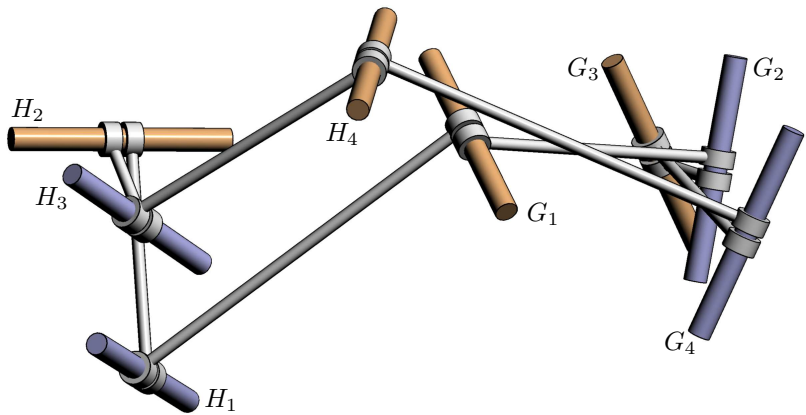

  \centering
  \begin{overpic}{img/8R-Mechanism2}
    \put(65,25){$G_1$}
    \put(93,43){$G_2$}
    \put(70,43){$G_3$}
    \put(82,9){$G_4$}
    \put(25,2){$H_1$}
    \put(1,38){$H_2$}
    \put(4,27){$H_3$}
    \put(40,35){$H_4$}
  \end{overpic}
  \caption{A spatial mechanism constructed from the factorizations in Equation~\eqref{eq:remarkable-dual}}
  \label{fig:1}
\end{figure}

Note that a similar mechanism construction from the polynomial $Q$ in
\eqref{eq:remarkable} is possible but results in a fairly trivial spherical
mechanism with five degrees of freedom whose configuration space contains an
open subset of $\SO$. This demonstrates the need for extending our results to
dual quaternions. Since the algebra of dual quaternions contains zero divisors
and non-invertible elements, factorization theory of bivariate dual quaternionic
polynomials will be more involved.

\begin{bibdiv}
\begin{biblist}

\bib{beauregard93}{article}{
      author={Beauregard, Raymond~A.},
       title={When is {$F[x,y]$} a unique factorization domain?},
        date={1993},
     journal={Proc. Am. Math. Soc.},
      volume={117},
      number={1},
       pages={67\ndash 70},
}

\bib{gallet15}{article}{
      author={Gallet, Matteo},
      author={Koutschan, Christoph},
      author={Li, Zijia},
      author={Regensburger, Georg},
      author={Schicho, Josef},
      author={Villamizar, Nelly},
       title={Planar linkages following a prescribed motion},
        date={2016},
     journal={Math. Comput.},
      volume={86},
      number={303},
       pages={473–506},
}

\bib{gentili08}{article}{
      author={Gentili, Graziano},
      author={Stoppato, Caterina},
       title={Zeros of regular functions and polynomials of a quaternionic
  variable},
        date={2008},
     journal={Michigan Math. J.},
      volume={56},
       pages={655\ndash 667},
}

\bib{gordon65}{article}{
      author={Gordon, Basil},
      author={Motzkin, Theodore~S.},
       title={On the zeros of polynomials over division rings},
        date={1965},
     journal={Trans. Amer. Math. Soc.},
      volume={116},
       pages={218\ndash 226},
}

\bib{hegedus13}{article}{
      author={Hegedüs, Gábor},
      author={Schicho, Josef},
      author={Schröcker, Hans-Peter},
       title={Factorization of rational curves in the {Study} quadric and
  revolute linkages},
        date={2013},
     journal={Mech. Mach. Theory},
      volume={69},
      number={1},
       pages={142\ndash 152},
}

\bib{hegedus15}{article}{
      author={Hegedüs, Gábor},
      author={Schicho, Josef},
      author={Schröcker, Hans-Peter},
       title={Four-pose synthesis of angle-symmetric {6R} linkages},
        date={2015},
     journal={ASME J. Mechanisms Robotics},
      volume={7},
      number={4},
}

\bib{huang02}{article}{
      author={Huang, Liping},
      author={So, Wasin},
       title={Quadratic formulas for quaternions},
        date={2002},
     journal={Appl. Math. Lett.},
      volume={15},
      number={15},
       pages={533\ndash 540},
}

\bib{husty12}{incollection}{
      author={Husty, Manfred},
      author={Schröcker, Hans-Peter},
       title={Kinematics and algebraic geometry},
        date={2012},
   booktitle={{21st Century Kinematics. The 2012 NSF Workshop}},
      editor={McCarthy, J.~Michael},
   publisher={Springer},
     address={London},
       pages={85\ndash 123},
}

\bib{klawitter15}{book}{
      author={Klawitter, Daniel},
       title={Clifford algebras. geometric modelling and chain geometries with
  application in kinematics},
   publisher={Springer Spektrum},
        date={2015},
        ISBN={978-3-658-07617-7},
}

\bib{lercher20}{misc}{
      author={Lercher, Johanna},
      author={Scharler, Daniel~F.},
      author={Schröcker, Hans-Peter},
      author={Siegele, Johannes},
       title={Factorization of quaternionic polynomials of bi-degree $(n,1)$},
         how={Submitted for publication},
        date={2020},
}

\bib{li18}{article}{
      author={Li, Zijia},
      author={Scharler, Daniel~F.},
      author={Schröcker, Hans-Peter},
       title={Factorization results for left polynomials in some associative
  real algebras: {State} of the art, applications, and open questions},
        date={2019},
     journal={J. Comput. Appl. Math.},
      volume={349},
       pages={508\ndash 522},
}

\bib{li15}{article}{
      author={Li, Zijia},
      author={Schicho, Josef},
      author={Schröcker, Hans-Peter},
       title={Spatial straight-line linkages by factorization of motion
  polynomials},
        date={2015},
     journal={ASME J. Mechanisms Robotics},
      volume={8},
      number={2},
}

\bib{li18c}{article}{
      author={Li, Zijia},
      author={Schicho, Josef},
      author={Schröcker, Hans-Peter},
       title={Kempe's universality theorem for rational space curves},
        date={2018},
     journal={Found. Comput. Math.},
      volume={18},
      number={2},
       pages={509\ndash 536},
}

\bib{li18b}{article}{
      author={Li, Zijia},
      author={Schicho, Josef},
      author={Schröcker, Hans-Peter},
       title={Kempe's universality theorem for rational space curves},
        date={2018},
     journal={Found. Comput. Math.},
      volume={18},
      number={2},
       pages={509\ndash 536},
}

\bib{maple}{misc}{
      author={{Maplesoft, a division of Waterloo Maple Inc.}},
       title={Maple},
     address={Waterloo, Ontario},
        date={2020},
}

\bib{niven41}{article}{
      author={Niven, Ivan},
       title={Equations in quaternions},
        date={1941},
     journal={Amer. Math. Monthly},
      volume={48},
      number={10},
       pages={654\ndash 661},
}

\bib{Skopenkov19}{article}{
      author={Skopenkov, Mikhail},
      author={Krasauskas, Rimvydas},
       title={Surfaces containing two circles through each point},
        date={2019},
     journal={Math. Ann.},
      volume={373},
       pages={1299\ndash 1327},
}

\end{biblist}
\end{bibdiv}
 
\end{document}